\newtheorem{example}{Example}
\newtheorem{theorem}{Theorem}
\newtheorem{remark}{Remark}
\newtheorem{corollary}{Corollary}
\def\x{x}
\def\y{y}
\def\r{r}
\def\u{u}
\def\v{v}
\def\c{c}
\begin{document}

\title{Extremum Seeking Optimal Controls of Unknown Systems}
\author{ Alexander~Scheinker and David Scheinker
\thanks{{Alexander~Scheinker is at Los Alamos National Laboratory. Email: {\tt\small ascheink@lanl.gov}. David~Scheinker is at Stanford University. Email: {\tt\small dscheink@stanford.edu}.}}
\thanks{{This research was supported by Los Alamos National Laboratory and Stanford University.}}%
}

\maketitle


\begin{abstract}
We present a method for finding optimal controllers for unknown, time-varying, dynamic systems which can be re-initialized from a given initial condition repeatedly, in which the performance measure is available for sampling with noise, but analytically unknown. Such systems are present throughout industry where processes must be repeated many times, such as a voltage source which is repeatedly turned on for a fraction of a second from zero initial conditions and then turned off again, whose output must track a specific trajectory, while the system's components are slowly drifting with time due to temperature variations. For systems with convex cost functions we prove that our algorithm will produce controllers that approach the minimal cost, e.g., the cost minimizing LQR optimal controller that could have been designed analytically had the system and objective function been known. We demonstrate the algorithm's effectiveness with simulation studies of noisy and time-varying systems.\end{abstract}




\section{Introduction}

\subsection{ES Background}
Extremum seeking (ES) is a model-free optimization technique which is being actively studied in the control community. Recent developments include utilizng ES for open loop control of tokamaks \cite{conf-luo09}, magnetohydrodynamic channel flow control \cite{ref-carnevale09}, a Lie bracket approximation approach for studying ES dynamics \cite{ref-durr-stan-john-11}, ES for stabilization of unknown, open-loop unstable systems \cite{ref-Sch-Krstic-TAC}, multivariable Newton-based ES for photovoltaic power optimization \cite{ref-Krstic-power}, Newton-based stochastic ES \cite{ref-Krstic-stoch}, constrained ES \cite{ref-Guay2}, electromagnetic actuator control \cite{ref-ben1}, gain tuning for nonlinear control \cite{ref-ben2}, and a proportional-integral design approach \cite{ref-Guay1}. In this paper we study the iterative creation of optimal controllers of repeatable, analytically unknown dynamic systems with analytically unknown objective functions based on the general dither, constrained ES algorithms studied in \cite{ref-Sch-Sch,ref-Sch-Sch2}.

\subsection{Repeatable systems}
We study repeatable systems, which are always re-initialized from the same initial conditions. Examples of such systems include robotic manipulators which perform the same motion repeatedly or accelerating cavities in particle accelerators which are turned on for a fraction of a second, hundreds of times per second. For example, at the Los Alamos Linear Particle Accelerator, accelerating cavities are re-initialized from zero initial conditions at a rate of 120Hz and each run of the system lasts only $T = 0.001$ seconds, as depicted in Figure \ref{time_scale}. In the control literature, this is sometimes referred to as batch-to-batch control \cite{ref-batch1,ref-batch2}.

\begin{figure*}[!t]
	\centering
    \includegraphics[width=.8\textwidth]{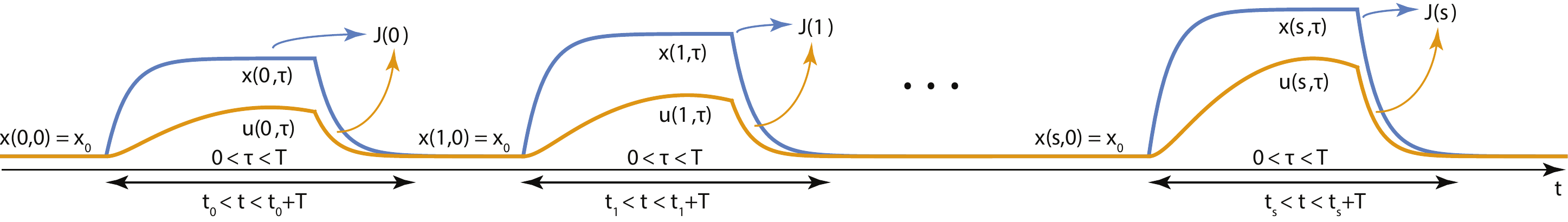}
\caption{Typical system.}
\label{time_scale}
\end{figure*}

\subsection{Optimal control of unknown systems background}

The optimal control of unknown systems has been studied with several different approaches. In \cite{ref-opt7} an iterative learning approach is utilized in which controllers are developed out of basis functions for systems without detailed knowledge. However, \cite{ref-opt7} requires knowledge of the objective function being minimized (the matrices $R$ and $Q$ as defined in (\ref{LQR_track_cost}) are utilized as part of the algorithm), does not provide an analytic proof of convergence, and relies on training data whereas the algorithm presented here can operate directly on samples of the unknown objective function values. The work in \cite{ref-Krstic-batch} is closely related to our work presented here. In \cite{ref-Krstic-batch} a novel discrete-time stochastic averaging and stochastic ES approach are used to iteratively optimize open loop control sequences for discrete-time linear systems with scalar inputs. In \cite{ref-Krstic-ILC} an ES-based iterative learning control (ILF) approach is developed which is applicable to a wide range of optimization and tracking problems and unlike typical ILC results does not require system knowledge. In \cite{ref-opt1}, online system identification is followed by training of neural networks off line to solve infinite horizon control problems. In \cite{ref-opt3}, an adaptive control scheme for control of unknown systems with infinite horizon cost function is presented. In \cite{ref-opt2} discrete time systems with infinite horizon cost functions are studied via adaptive dynamic programming, developing output feedback optimal controllers. In \cite{ref-opt4}, unknown, nonaffine nonlinear discrete-time systems are studied for which optimal controllers are developed via adaptive dynamic programming. In \cite{ref-opt5}, control affine nonlinear discrete-time systems are studied and feedback controllers are designed using neural networks. In \cite{ref-opt6}, optimal open-loop controllers are derived for unknown discrete-time linear systems via ES. 

\subsection{Contributions of this paper}
As in some of the above results, our work is applicable to noisy, uncertain systems without knowledge of the system dynamics or of analytic knowledge of the objective function, working purely based on noisy scalar measurements of an unknown objective function. The contributions of this work are: \\
1). Unlike iterative learning control approaches, we do not require knowledge of a reference trajectory to track, we do not require system dynamics knowledge, we can handle noisy and time-varying systems. \\
2). Our work is applicable to time-varying systems. In the case of unknown time varying vector-valued systems of the form $\dot{x}=f(x,t)+g(x,t)u$, we iteratively construct controllers which find the global minimum of convex objective functions. \\
3). We do not require knowledge of the analytic form of the objective function, only noisy measurement are used. \\
4). In the case of unknown time varying vector-valued systems of the form $\dot{x}=A(t)x+B(t)u$, we iteratively construct controllers for convex objective functions and prove that they track, arbitrarily closely, the time-varying LQR-type optimal controllers which could be designed if $A(t)$, $B(t)$ and the objective function were known. \\
5). In the case of unknown, vector-valued, time invariant systems such as $\dot{x}=Ax+Bu$, when full state measurements, $x(t)$, are available, we are able to iteratively calculate the unique optimal feedback gain, $K(t)$, such that $u = -K(t)x$ is the optimal feedback controller relative to an analytically unknown convex objective function, such as the finite-horizon, continuous-time LQR. By creating the optimal feedback controller, our results are initial-condition independent, unlike the feedforward results of previous work. Furthermore, this same approach will provide the optimal linear quadratic tracker for a chosen trajectory.

\subsection{Summary of the results of the paper}
We consider an analytically unknown optimal control problem with trajectory $x$ and cost $J$. These depend on controller $u(s)$ where $s=0,1,\dots,N$ is an index for a sequence of controllers approaching an optimal controller.
\begin{eqnarray}
	\frac{dx(s,\tau)}{d\tau} &=& f(x(s,\tau),u(s,\tau)), \quad x(s,0)=x_0, \label{dynamics_iter} \\
	J(u(s,\tau)) &=&  F(x(s,T)) + \int_0^T G(x(s,\tau),u(s,\tau))d\tau, \label{cost_iter} \\
	u(s,\tau) &=& \sum_{j=1}^m a_j(s) \phi_j(\tau), \label{control_iter}
\end{eqnarray}
where the functions $\phi_j(\tau)$ are a subset of any basis for $L^p[0,T]$, such as, for example, the well known Fourier basis
\begin{equation}
	\phi_j(\tau) = \cos(\nu_j\tau) \quad \mathrm{or} \quad \sin(\nu_j \tau), \quad \nu_j = 2\pi j/T.
\end{equation}
Our iterative procedure for finding an optimal controller for system (\ref{dynamics_iter})-(\ref{control_iter}) is initialized at $s=0$ with a controller $u(0,\tau) = 0$, i.e., $a_n = 0$ for all $n$. With this controller, the system evolves for $\tau \in [0,T]$ and results in $J(0)$. The parameters of the controller are then updated as
\begin{equation}
		a_j(s+1) = a_j(s) + \Delta \sqrt{\alpha \omega_j}\cos \left ( \omega_j s \Delta + k J(s) \right ). \label{a_update}
\end{equation}
For $s>0$, the system is re-initialized to $x_0$, evolves using $u(s,\tau)$ for $\tau \in [0,T]$, and results in $J(s+1)$. In this paper, we show that this iterative method results in a minimization of $J$, which has a unique global minimum when convex.

In the case of optimal feedback control design of the form $u=-K(t)x$, we create the $n \times n$ matrix $K(t)$ where each function $k_{l,m}(t)$ is constructed out of linear combinations of basis functions of the form (\ref{control_iter}). For an n-dimensional system, we converge to the unique optimal $K(t)$ when the ES algorithm is used to simultaneously optimize $n$ objective functions $J = \sum_{j=1}^{n}J_j$ of the form (\ref{cost_iter}) for n-linearly independent initial conditions of a fixed linear system. Once the algorithm has converged we have the universal optimal feedback controller for any initial condition.

%

\section{Problem Setup and ES Background}

%

Consider system system (\ref{dynamics_iter})-(\ref{control_iter}). The iterative procedure for finding the optimal controller is: \\
1). Choose $k>0$, $\alpha>0$, and a set of distinct frequencies, $\left \{ \omega_n \right \}_{n=0}^m$, of the form $\omega_n = \omega_0 r_n$, where $\omega_0 \gg 1$ and $r_i \neq r_j$ for $i \neq j$. Choose a time step, $\Delta = \frac{2\pi}{N \omega_0}$, with $N \gg 1$. \\
2). At the initial step $s=0$, set the values of the coefficients $a_n(0)$ to zero or to the best guess for what they should be. This defines the controller $u(a(0),\tau)$ as given by (\ref{control_iter}). Apply this initial controller $u(0,\tau)$ to the system (\ref{dynamics_iter}) over the time interval $[0, T]$ and record the performance index, $J(0)$, as defined by (\ref{cost_iter}). \\
3). At step $s>0$, update the values of the coefficients according to (\ref{a_update}). Note that instead of $\cos()$, one may choose $\sin()$, or square waves, or triangle waves, or any oscillatory functions satisfying some weak limits as described in Theorem 1 of \cite{ref-Sch-Sch}. \\
4). The new coefficients, $a(s+1)$, define an updated controller, $u(s+1,\tau)$, which is then applied to the system (\ref{dynamics_iter}), for the time interval $[0, T]$ and record the performance index, $J(s+1)$, as defined by (\ref{cost_iter}).

To make clear the choice for the dynamics of the algorithm given by (\ref{a_update}), we state a result that is a direct Corollary of Theorem 1 in \cite{ref-Sch-Sch}.

\begin{corollary}\label{cor1}
Consider the vector valued system
\begin{equation}
	\frac{da_j(t)}{dt} = \sqrt{\alpha\omega_j}\cos \left ( \omega_j t + k \hat{J}(a,t) \right ), \label{adot0}
\end{equation}
where $\omega_j = \omega_0 r_j$ and $r_i \neq r_j$ for $i\neq j$, $a=(a_1,\dots,a_n) \in \mathbb{R}^n$, $J : \mathbb{R}^n \times \mathbb{R} \rightarrow \mathbb{R}$ is twice continuously differentiable with respect to $a$, and $\hat{J}=J+n(t)$ is a noise-corrupted measurement of the unknown function $J$. For large $\omega_0$, the average system whose trajectory is closely related to (\ref{adot0}) is:
\begin{equation}
	\frac{d\bar{a}_j}{dt} = -\frac{k\alpha}{2}\frac{\partial J(\bar{a},t)}{\partial \bar{a}_j}, \label{abdot2}
\end{equation}
a gradient descent of the actual unknown function $J(a,t)$ with respect to $a$, despite noise-corrupted measurements.
\end{corollary}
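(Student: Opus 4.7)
The plan is to treat the system (\ref{adot0}) as a fast-oscillation problem in which the time scale set by $\omega_0$ is much faster than the scale on which $a(t)$ evolves, and then invoke the weak-limit averaging theorem cited as Theorem 1 of \cite{ref-Sch-Sch}. First I would expand the cosine via the sum-angle identity,
\begin{equation*}
\cos(\omega_j t + k\hat{J}) = \cos(\omega_j t)\cos(k\hat{J}) - \sin(\omega_j t)\sin(k\hat{J}),
\end{equation*}
which rewrites the dynamics in the bilinear oscillatory form
\begin{equation*}
\dot{a}_j = \sqrt{\alpha\omega_j}\bigl[\cos(\omega_j t)\,f_1(a,t) + \sin(\omega_j t)\,f_2(a,t)\bigr],
\end{equation*}
with $f_1(a,t)=\cos(k J(a,t))$, $f_2(a,t)=-\sin(k J(a,t))$. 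This is precisely the structure for which the $\sqrt{\omega}$-scaled Lie-bracket / weak-limit averaging theorem of \cite{ref-Sch-Sch} computes the averaged vector field.

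Next I would check that the hypotheses of that theorem are satisfied here. The $C^2$ regularity of $J$ in $a$ makes the fields $f_1, f_2$ continuously differentiable; the assumption $\omega_i \neq \omega_j$ for $i\neq j$ rules out cross-coordinate resonances in the averaging; and the $\sqrt{\alpha\omega_j}$ prefactor is exactly the scaling that places the system in the regime where the first-order Lie-bracket term is the surviving contribution in the averaged dynamics. Applying the theorem coordinatewise, the averaged system is proportional to the Lie bracket of $f_1$ and $f_2$ in the $a_j$-direction, which a direct computation evaluates as
\begin{equation*}
f_1\frac{\partial f_2}{\partial a_j}-f_2\frac{\partial f_1}{\partial a_j} = -k\bigl(\cos^2(kJ)+\sin^2(kJ)\bigr)\frac{\partial J}{\partial a_j}=-k\frac{\partial J}{\partial a_j}.
\end{equation*}
Together with the factor $\alpha/2$ that the averaging identity pulls out from $\langle\cos(\omega_j t)\int_0^t\sin(\omega_j s)\,ds\rangle$-type moments, this produces exactly (\ref{abdot2}).

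Finally I would address the noise: $n(t)$ enters only through the bounded function $\cos(k\hat{J})=\cos(k J + k n)$, and is then multiplied by the high-frequency dither $\sqrt{\alpha\omega_j}\cos(\omega_j t)$. Under the weak-limit hypotheses in Theorem 1 of \cite{ref-Sch-Sch}, any bounded perturbation inside the phase of a dither whose frequency tends to infinity contributes zero to the averaged vector field, so the averaged dynamics depends on the true $J$ rather than on $\hat{J}$, which is the noise-robustness statement of the corollary. The main obstacle, were one to reprove this from first principles instead of cite \cite{ref-Sch-Sch}, would be exactly this last point: justifying that the noise $n(t)$, which need not be periodic or smooth, still integrates against the fast dither to vanish in the weak-limit sense; the rigorous treatment of this interaction is the technical core of the cited theorem and is what makes it possible to promote this gradient-descent statement from a formal computation to an actual averaging result.
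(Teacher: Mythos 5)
Your proposal is correct and takes essentially the same route as the paper, which offers no explicit proof of this corollary beyond asserting that it is a direct consequence of Theorem 1 in \cite{ref-Sch-Sch}. You additionally spell out the standard Lie-bracket averaging computation (writing the dynamics as $\sqrt{\alpha\omega_j}[\cos(\omega_j t)\cos(k\hat J)-\sin(\omega_j t)\sin(k\hat J)]$ and evaluating $f_1\,\partial f_2/\partial a_j - f_2\,\partial f_1/\partial a_j = -k\,\partial J/\partial a_j$), which correctly reproduces the stated averaged gradient flow and accurately locates the only genuinely hard step --- the noise robustness --- inside the cited theorem.
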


The reason for choosing the parameter update equation (\ref{a_update}) can now be seen as a finite difference approximation of the dynamics (\ref{adot0}) which will perform a gradient descent of the unknown objective function $J$ by finding the optimal controller. For $\Delta \ll 1$, we can write the finite difference approximation of the derivative (\ref{adot0}) as
\begin{equation}
	\frac{a_j(t + \Delta) - a_j(t)}{\Delta} = \sqrt{\alpha \omega_n}\cos \left ( \omega_n t + k J(a(t),t) \right ). \label{update1_v1}
\end{equation}
Taking samples of (\ref{update1_v1}) at time steps $t_s = s\Delta$, we rewrite the right side of (\ref{update1_v1}) as
\begin{equation}
	\sqrt{\alpha \omega_j}\cos \left ( \omega_j s\Delta + k J(a(s\Delta),s\Delta) \right ). \label{update1_v2}
\end{equation}
Now we have iteratively defined sequences of values of $a_j$ and $J$, based only on $a_j(0)$ and $J(0)$, $\left \{ a_j(s) \right \}_{s=0}^\infty$ and $  \left \{ J(s) \right \}_{s=0}^\infty$, where, for notational convenience we refer to $a_j(s\Delta)$ as $a_j(s)$ and to $J(a(s\Delta),s\Delta)$ as $J(s)$. We then have
\begin{eqnarray}
	\frac{a_n(s+1) - a_n(s)}{\Delta} = \sqrt{\alpha \omega_n}\cos \left ( \omega_n s \Delta + k J(s) \right ), \label{update1_v3}
\end{eqnarray}
which gives the iterative update equations (\ref{a_update}). While our analytic results hold for dynamics of the form (\ref{adot0}), In hardware, sufficiently large $\omega_0$ and small $\Delta$, result in convergence.

With the approach described above, from the iteratively updated parameters' point of view, in the limit as $\Delta \rightarrow 0$, the overall system and update dynamics take on a continuous, two time scale $(t,\tau)$ form:
\begin{eqnarray}
	\frac{dx(t,\tau)}{d\tau} &=& f(x(t,\tau),u(t,\tau)), \quad x(t,0)=x_0, \label{dynamics_iter2} \\
    u(t,\tau) & = & \sum_{j=1}^m a_j(t) \phi_j(\tau), \label{controller_iter2} \\
    \frac{da_j(t)}{dt} & = & \sqrt{\alpha\omega_j}\cos \left ( \omega_j t + k J(u(t,\tau)) \right )  \label{a_iter2} \\
    J(u(t,\tau)) &=&  F(x(t,T)) + \int_0^T G(x(t,\tau),u(t,\tau))d\tau. \label{cost_iter2}
\end{eqnarray}
Before we go on to prove our technical results about systems of the form (\ref{dynamics_iter2})-(\ref{cost_iter2}), we clarify the use of two time scales, $t \in \mathbb{R}^+$ and $\tau \in [0,T]$, we explicitly work $J(t)$ in a simple example of the above. 
\begin{example}
Consider the system
\begin{eqnarray}
	\frac{dx(\tau,t)}{d\tau} &=& u(\tau,t), \quad x(0,t) = x_0, \quad \tau \in [0,T], \\
	u(\tau,t) &=& a(t)\psi(\tau),\quad \psi(\tau) \in L^p[0,T].
\end{eqnarray}
We can expanding the objective function 
\begin{equation}
	J=\int_{0}^{T}x^2(\tau,t)d\tau + \int_{0}^{T} u^2(\tau,t)d\tau
\end{equation}
in terms of the solution for $x(\tau,t)$ as
\begin{equation}
	J = \int_{0}^{T}\left [ \left ( x_0 + a(t)\int_{0}^{\tau}\psi(s)ds \right )^2 + a^2(t)\psi^2(\tau) \right ]d\tau.
\end{equation}
If we evolve $a(t)$ according to the dynamics
\begin{equation}
	\frac{da}{dt} = \sqrt{\alpha\omega}\cos \left ( \omega t + k \hat{J}(a) \right ), \quad \hat{J} = J + n(t),
\end{equation}
for large $\omega$, the average dynamics are
\begin{equation}
	\frac{d\bar{a}}{dt} = -\frac{k\alpha}{2} \frac{dJ(\bar{a})}{d\bar{a}},
\end{equation}
a gradient descent of $J$, where in this simple case
\begin{equation}
	\frac{1}{2}\frac{dJ}{da} = \int_{0}^{T} \left ( a\left [ \int_{0}^{\tau}\psi(s)ds \right ]^2 + a\psi^2(\tau) + \psi(\tau)  \right )d\tau. \label{gradient}
\end{equation}
\end{example}

%

\section{Proof of Convergence to the Optimal Controller}

%

\begin{theorem}\label{Optimality}

Consider the system (\ref{dynamics_iter2})-(\ref{cost_iter2}) and suppose that $J(u,t)$ is differentiable and convex in $u$ and that there exists a $u^*(t)$ that minimizes $J(u,t)$ over $L^p([0,T])$ at each time $t$. Furthermore, assume that there exists a bound $M>0$ on the rate of change of $u^\star(t)$ such that $\left \| du^\star(t)/dt \right \| < M$. Consider a set of distinct frequencies, $\left \{ \omega_n \right \}_{n=0}^m$, of the form $\omega_n = \omega_0 r_n$, where $\omega_0 \gg 1$ and $r_i \neq r_j$ for $i \neq j$. For each $\omega$, let $a_{\omega}(t) = (a_{\omega,1}(t),...,a_{\omega,m}(t))$ denote the vector of solutions to the following system 
\begin{equation}
	\frac{da_{\omega,j}}{d t} = \sqrt{\alpha \omega_j}\cos \left ( \omega_j \tau + k J(u(a_\omega(t)),t) \right ), \quad  a(0) = 0.
\end{equation}
For any $\epsilon>0$, there exists $k>0$, $\alpha>0$, $\omega^\star$, $m^\star$, and $t^*,T^*>0$ such that for each $\omega>\omega^\star$, $m>m^\star$, and $t\in [t^*,t^*+T^*]$,
\begin{equation}
 | J(a_\omega(t),t) - J(u^*(t),t) | < \epsilon.
\end{equation}
\end{theorem}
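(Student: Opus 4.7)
The plan is to decompose the error $|J(a_\omega(t),t) - J(u^\star(t),t)|$ into three sources that can be made arbitrarily small by tuning, in order, $m$, the product $k\alpha$, and $\omega_0$. Specifically, I will insert two intermediate quantities: the minimum $J_m^\star(t) := \min_{a\in\mathbb{R}^m} J(u(a,\cdot),t)$ over the $m$-dimensional span of $\{\phi_1,\ldots,\phi_m\}$, attained at some $a_m^\star(t)$, and the averaged trajectory $\bar a(t)$ obtained from Corollary \ref{cor1}. Then
\[
|J(a_\omega,t) - J(u^\star,t)| \le |J(a_\omega,t) - J(\bar a,t)| + |J(\bar a,t) - J_m^\star(t)| + |J_m^\star(t) - J(u^\star,t)|,
\]
and I will bound each term by $\epsilon/3$.

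For the third (truncation) term, since $\{\phi_j\}$ is a subset of a basis of $L^p[0,T]$, the span of the first $m$ basis elements is dense as $m\to\infty$. Combined with differentiability (hence continuity) of $J$ in $u$, this gives $J_m^\star(t)\to J(u^\star(t),t)$, so one can choose $m^\star$ making this term smaller than $\epsilon/3$. For the first (averaging) term, Corollary \ref{cor1} tells us that the trajectories of $a_\omega(t)$ and $\bar a(t)$ are $O(1/\omega_0)$ close on finite intervals; continuity of $J$ in $a$ (from the chain rule, since $u$ is linear in $a$ and $J$ is differentiable in $u$) then makes this term smaller than $\epsilon/3$ for $\omega_0$ sufficiently large.

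The heart of the proof is the middle (tracking) term. Because $u = \sum_j a_j\phi_j$ is affine in $a$, convexity of $J$ in $u$ lifts to convexity of $J(\bar a,t)$ in $\bar a$ for each fixed $t$. Corollary \ref{cor1} identifies the averaged dynamics as the gradient flow $\dot{\bar a}=-(k\alpha/2)\nabla_a J(\bar a,t)$. For a frozen $t$, this flow drives $J(\bar a,t)$ monotonically toward $J_m^\star(t)$, and by standard convex gradient flow estimates the value gap decays like $1/(k\alpha\,t)$. Since $t$ is in fact moving and $\|du^\star/dt\|<M$ is bounded, $a_m^\star(t)$ also varies at a bounded rate (this uses continuity of the argmin of a convex function with respect to slow parameters; here the linear parametrization of $u$ in $a$ makes this an explicit computation once one expresses $a_m^\star(t)$ as the best $L^2$ approximation of $u^\star(t)$ in the basis). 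This converts the tracking analysis into a forced gradient flow with a drift of size $O(M)$. By making $k\alpha$ large relative to $M$ and waiting a transient time $t^\star$ for the flow to enter the steady-state tracking regime, we can guarantee that on an interval $[t^\star, t^\star+T^\star]$ the value gap $J(\bar a(t),t) - J_m^\star(t)$ remains below $\epsilon/3$.

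The main obstacle is precisely this tracking step in the merely convex (not strongly convex) setting, since plain convex gradient flow converges only sublinearly and does not a priori yield a uniform bound against a drifting target. I would address this by exploiting the fact that $J$ is a composition of a convex cost with a linear-in-$a$ parametrization: one can reduce the tracking problem to approximating the moving point $u^\star(t)$ by its truncated series $\sum_{j=1}^m a_j(t)\phi_j(\tau)$, and control the drift of this target using the bound $M$ and standard basis-projection estimates. Combining the three bounds and choosing the parameters $m>m^\star$, $k\alpha$ sufficiently large, and $\omega_0>\omega^\star$ in that order yields the claimed inequality on $[t^\star, t^\star+T^\star]$.
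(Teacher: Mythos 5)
Your proposal is correct and follows essentially the same route as the paper's proof: the same triangle-inequality decomposition into a truncation error (density of $\mathrm{span}\{\phi_1,\dots,\phi_m\}$ in $L^p[0,T]$ plus continuity of $J$), a gradient-flow convergence step relying on convexity of $J$ in $a$ via the linear parametrization $u=a\cdot\phi$, and an averaging step invoking Theorem~1 of \cite{ref-Sch-Sch} (Corollary~\ref{cor1}) to relate $a_\omega(t)$ to the averaged trajectory. If anything your bookkeeping is slightly cleaner---you compare $a_\omega(t)$ directly to the $m$-dimensional averaged gradient flow and its minimizer $a_m^\star(t)$ rather than to a projection of the infinite-dimensional flow, and you explicitly flag the moving-minimizer tracking step for a merely convex $J$, which the paper asserts without elaboration.
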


\begin{remark}
The key technical component of the proof is a consequence of Theorem 1 in \cite{ref-Sch-Sch}. To simplify the statement and proof of the theorem, we assume the existence of an optimal solution. General sufficient conditions for this are available in the classical works of Rockafeller and related papers \cite{RockafellerI} - \cite{RockafellerII}.
\end{remark}

\begin{proof}
Fix $\epsilon > 0$ and a basis $\left \{ \phi_j(\tau) \right \}_1^\infty$ for $L^p[(0,T)]$. Note that each controller $u(t,\tau)$ is uniquely defined by its coefficients $a(t) = \{a_n(t)\}_1^\infty$ in the basis $\phi = \{\phi_n(\tau)\}_1^\infty$, i.e. 
\[u(t,\tau) = a(t)\cdot\phi(\tau) = \sum_{j=1}^\infty a_j(t)\phi(\tau).\]
The cost $J(u,t)$ is thus a function of the vector of coefficients $a(t)$ and may be written as $J(u(a(t)),t)$.

Consider the system of $L^p$ valued differential equations defined by the gradient descent of the coefficients of $J$
\begin{eqnarray}
	\frac{da}{dt} &=& -\frac{k\alpha}{2}\frac{\partial J(u,t)}{\partial a} = 
    -\frac{k\alpha}{2}\frac{\partial J(a \cdot \phi,t)}{\partial a}.  \label{GDUm}
\end{eqnarray}
$J(u,t)$ is convex in $u$ by assumption and $u$ is linear in $a$ by definition. Since the composition of a convex and a linear function is convex, $J(u(a),t)$ is convex in $a$. A $u^*(t)$ that minimizes $J(u,t)$ exists for each $t$ by assumption and there exists a bound, $M>0$ such that $\left \| du^\star(t)/dt \right \| < M$ by assumption. Thus, for any $\epsilon>0$, there exists $k\alpha>0$ such that system (\ref{GDUm}) converges to within $\epsilon$ of such a $u^*(t)$, i.e. $\displaystyle{\lim_{t\to\infty} \left \| a(t) - a^*(t) \right \| < \epsilon}$. Let $a^*(t)$ denote the coefficients of $u^*(t)$ in the basis $\phi_n(\tau)$, i.e. 
\[u^*(t) = \sum_{j=1}^\infty a^*_j(t)\phi_j(\tau).\]
Since $J$ is continuous in $t$ and $a$, and $a$ is continuous in $t$, $J(a(t),t)$ is continuous in $t$. Thus, there exists $t^*$ such that for each $t>t^*$ we have $|J(a(t),t) - J(a^*(t),t)| < \frac{\epsilon}{2}$.

We will complete the proof by showing that: 1). For $t>t^*$ and a sufficiently large $m$ the restriction of $a(t)$ to an m-dimensional subspace of $L^p[(0,T)]$ is close to $u^*(t)$. 2). For sufficiently large $\omega$, the original system $a_\omega(t)$ is sufficiently close to $a(t)$.

Each choice of $m$ corresponds to a subspace of $L^p([0,T])$, denoted $U_m$, spanned by $\{\phi_n(\tau)\}_1^m$. For $m$ fixed, denote the projection of $u^*(t)$ to $U_m$ with $u_m^*(t) = \sum_{j=1}^m a_j^*(t)\phi_j(\tau)$ and 
the projection of $a(t)$ to $U_m$ with $a_m(t) = \{a_j(t)\}_1^m$. Since $J(a(t),t)$ is continuous in $t$, there exists an $m^*$ such that for $t>t^*$ and any $m > m^*$ we have $|J(a_m(t),t)-J(u^*(t),t)|<\frac{\epsilon}{2}$. 

Theorem 1 in \cite{ref-Sch-Sch} which implies that for each $\delta >0$ and $t_0,T\geq0$, there exists $\omega^\star$ such that for each $\omega>\omega^\star$, the trajectories 
$a_\omega(t)$ and $a_m(t)$ satisfy $\max_{t\in \left [ t_0, t_0 + T \right ]} \left \| a_\omega(t) - \alpha_m(t) \right \| < \delta$.

Since $J$ is continuous in $\alpha_m$, there exists a $\delta$ 
such that $\left \| a_\omega(t) - \alpha_m(t) \right \| < \delta$ implies that 
$|J(a_\omega(t)) - J(\alpha_m(t))| < \frac{\epsilon}{2}$. Thus, for $T^*>0$, $m > m^*$, $t\in [t^*,t^*+T^*]$, and $\omega>\omega^\star$, by the triangle inequality, $|J(a_\omega(t)) - J(a^*(t))|$ is bounded by \[|J(a(t),t) - J(a_m(t),t)| +  |J(a_m^*(t),t) - J(a^*(t),t)| < \epsilon. \]
\end{proof}

%

\section{Application to Linear Quadratic Tracker}\label{ESLQRT}

%

\begin{figure*}[!t]
    \centering
    \includegraphics[width=.232\textwidth]{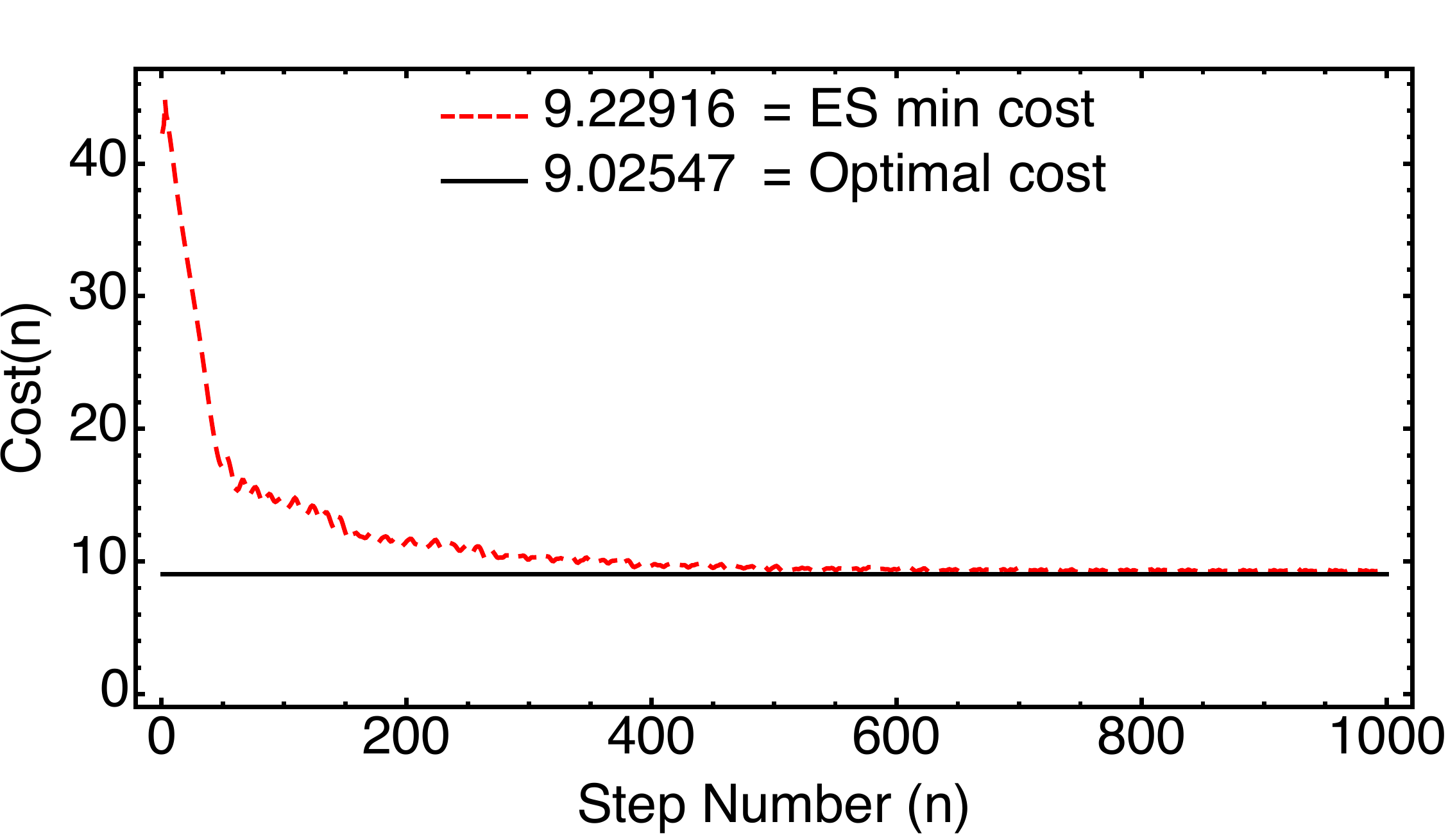} \
    \includegraphics[width=.232\textwidth]{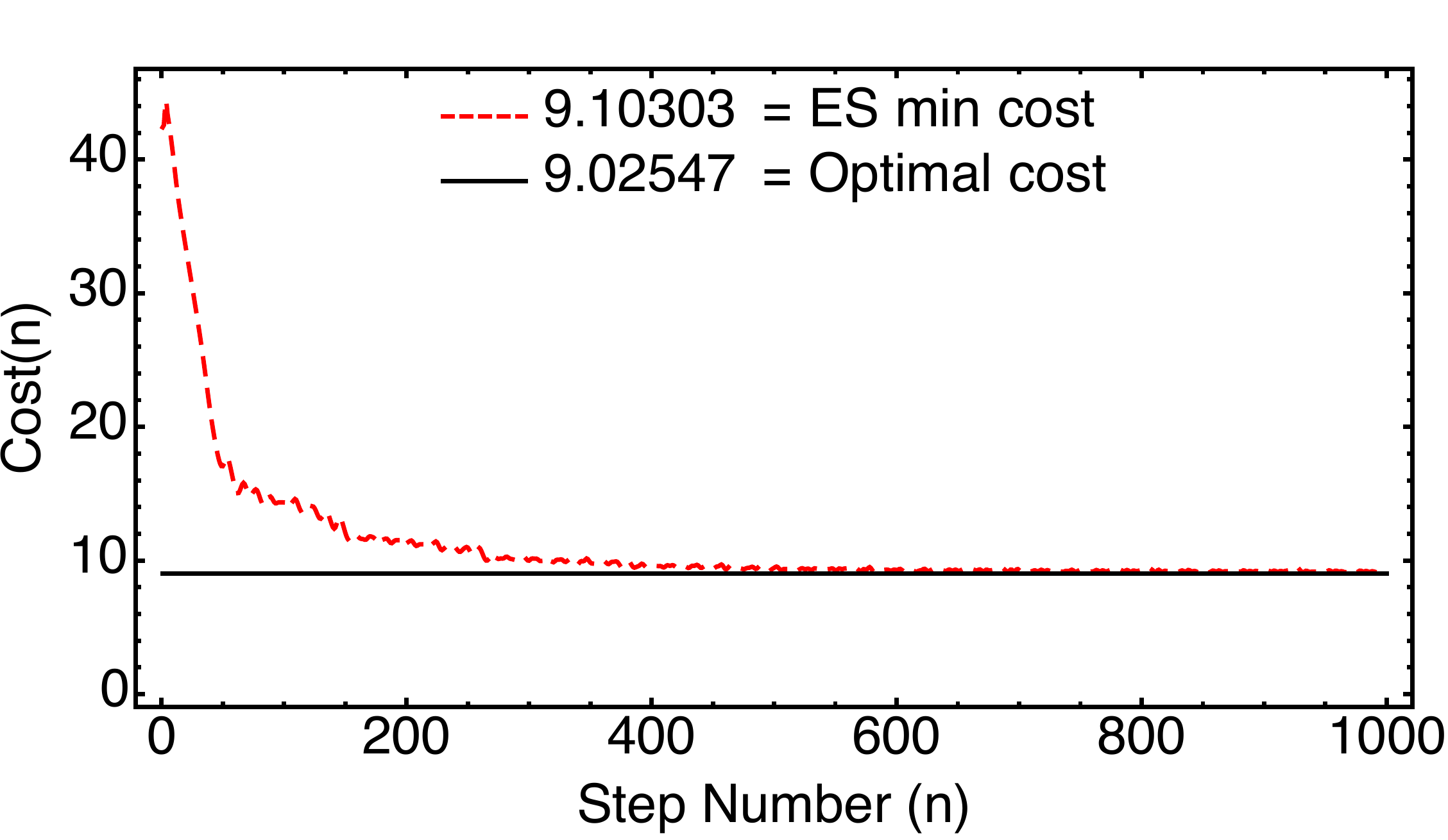} \
    \includegraphics[width=.22\textwidth]{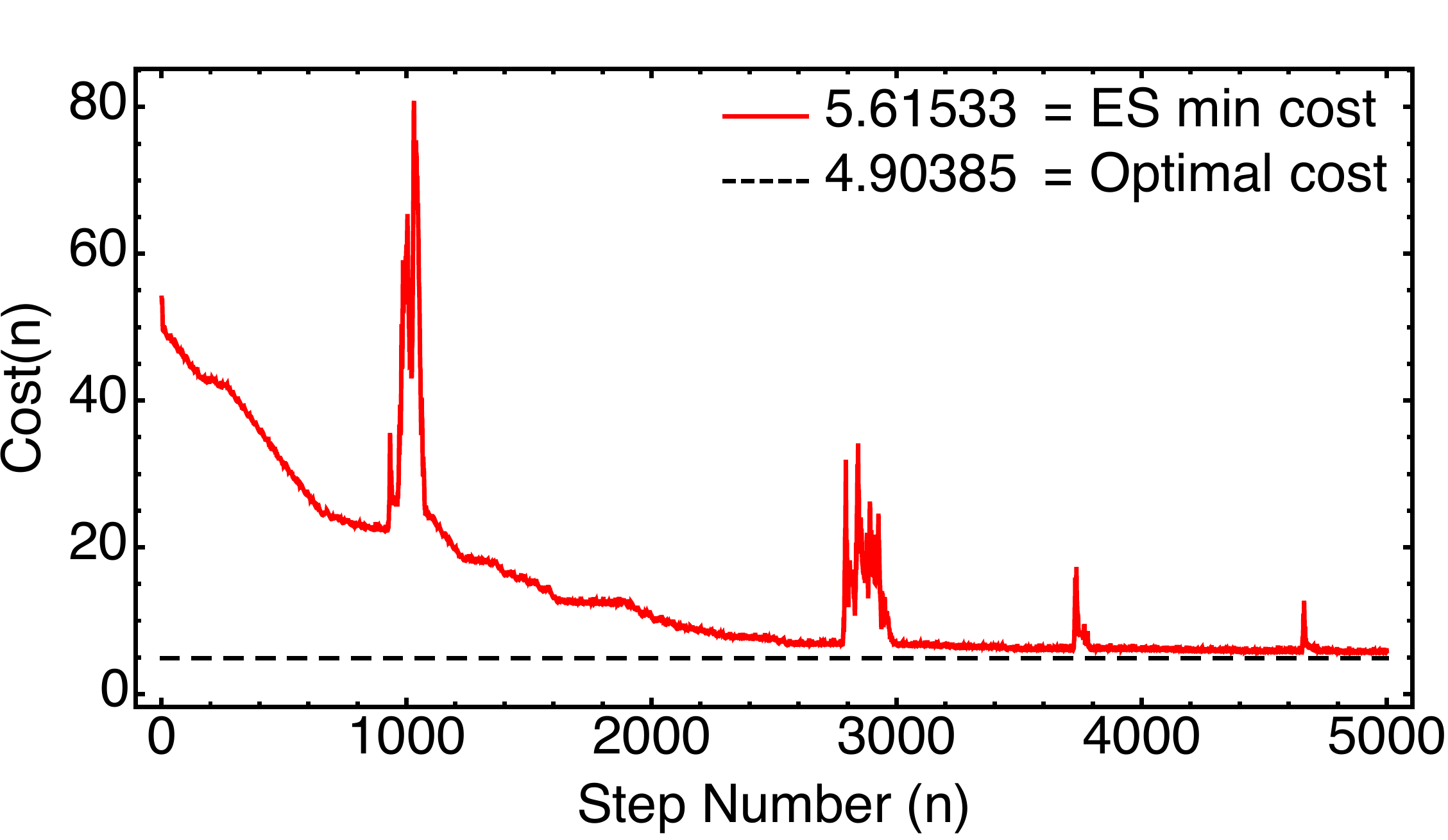} \
    \includegraphics[width=.232\textwidth]{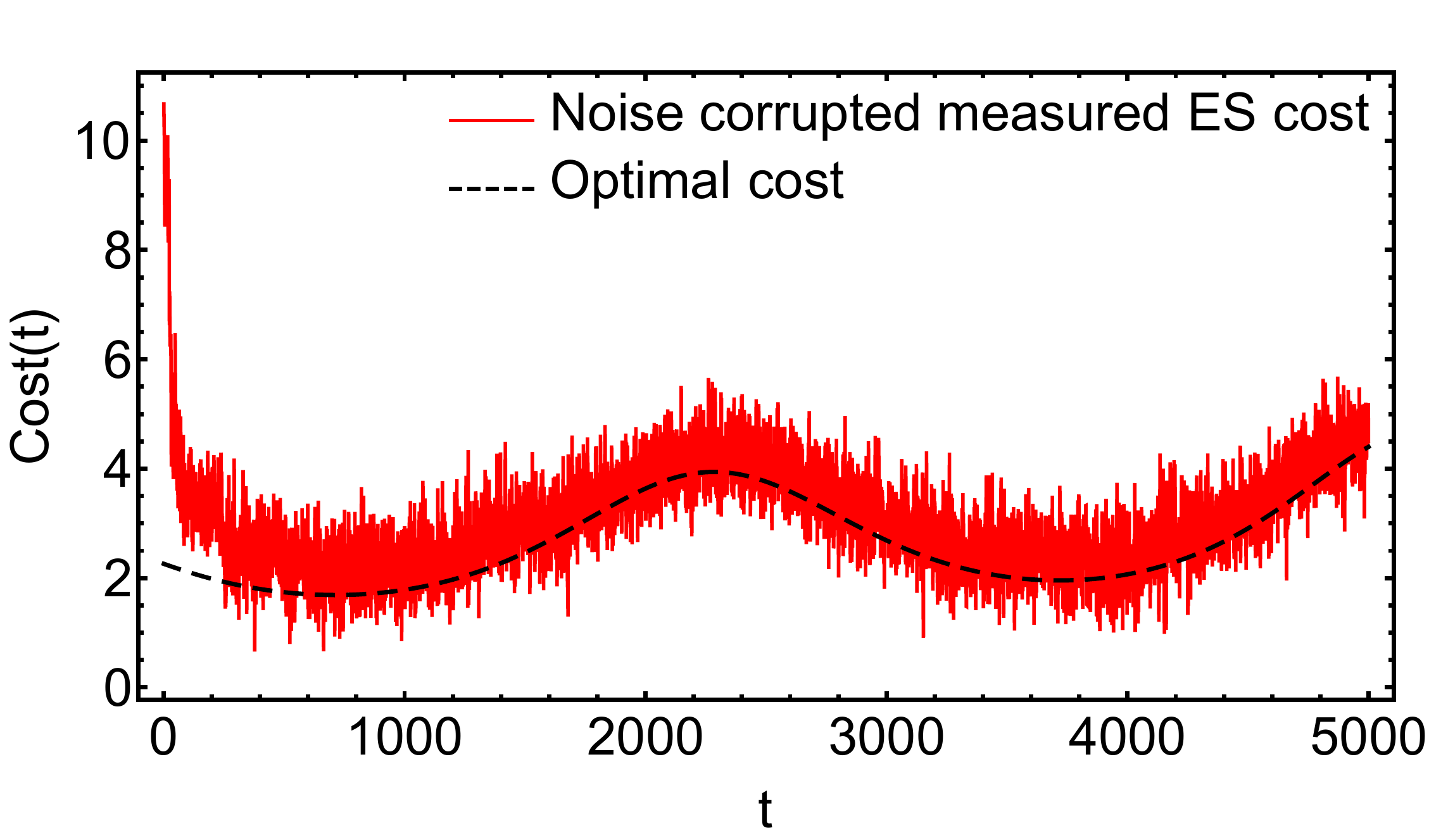} \\
    \includegraphics[width=.232\textwidth]{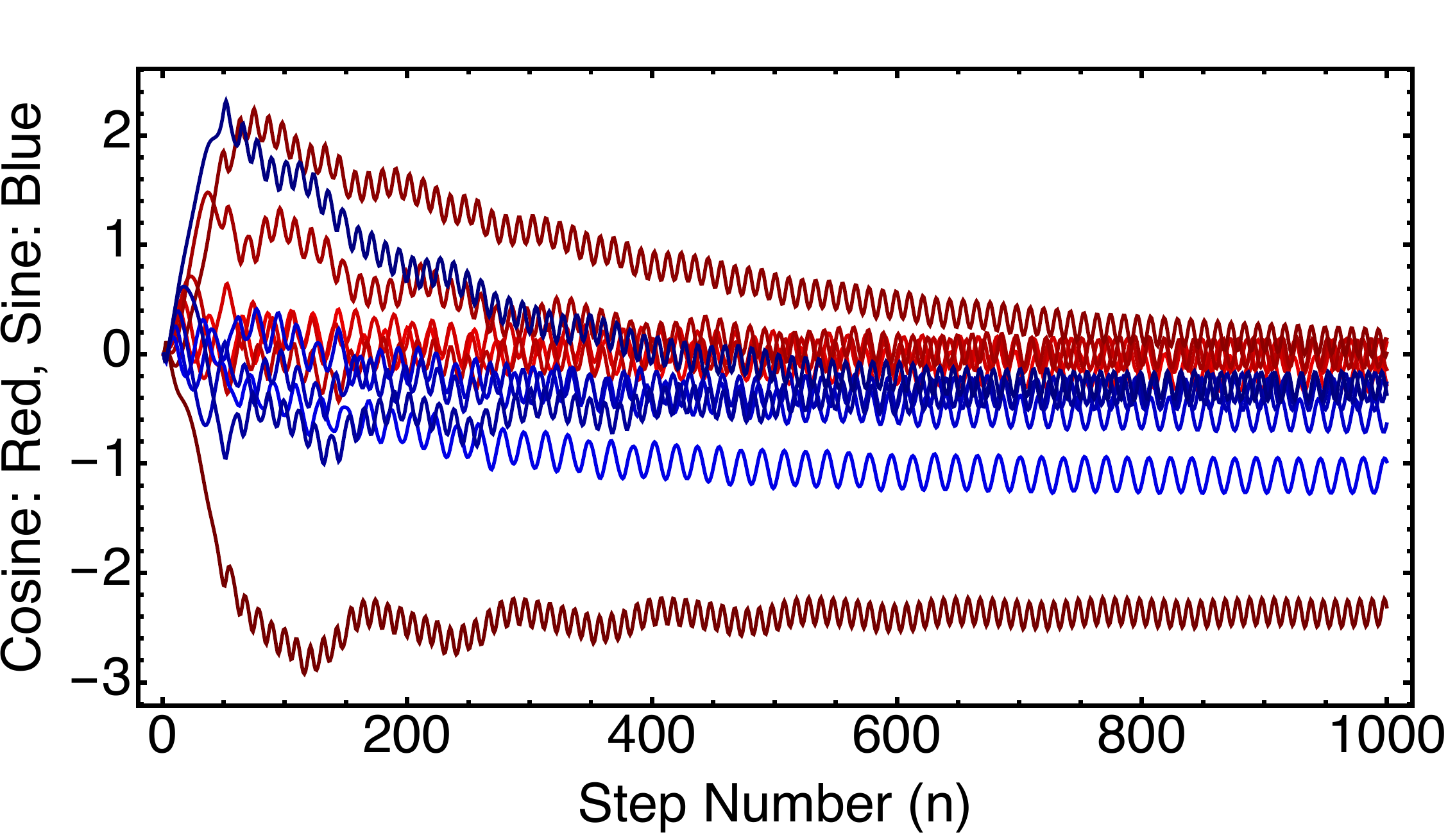} \
    \includegraphics[width=.232\textwidth]{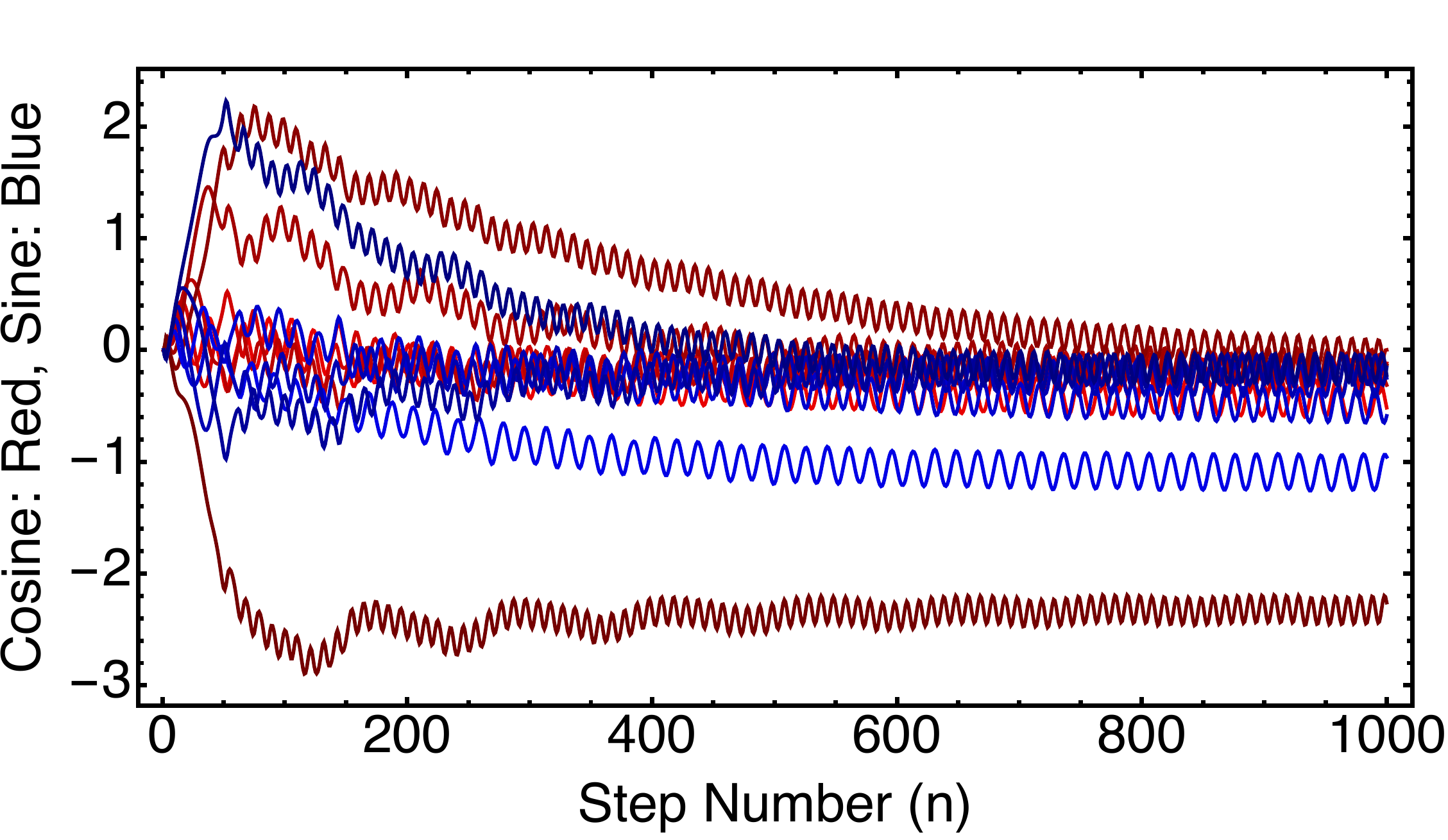} \
    \includegraphics[width=.232\textwidth]{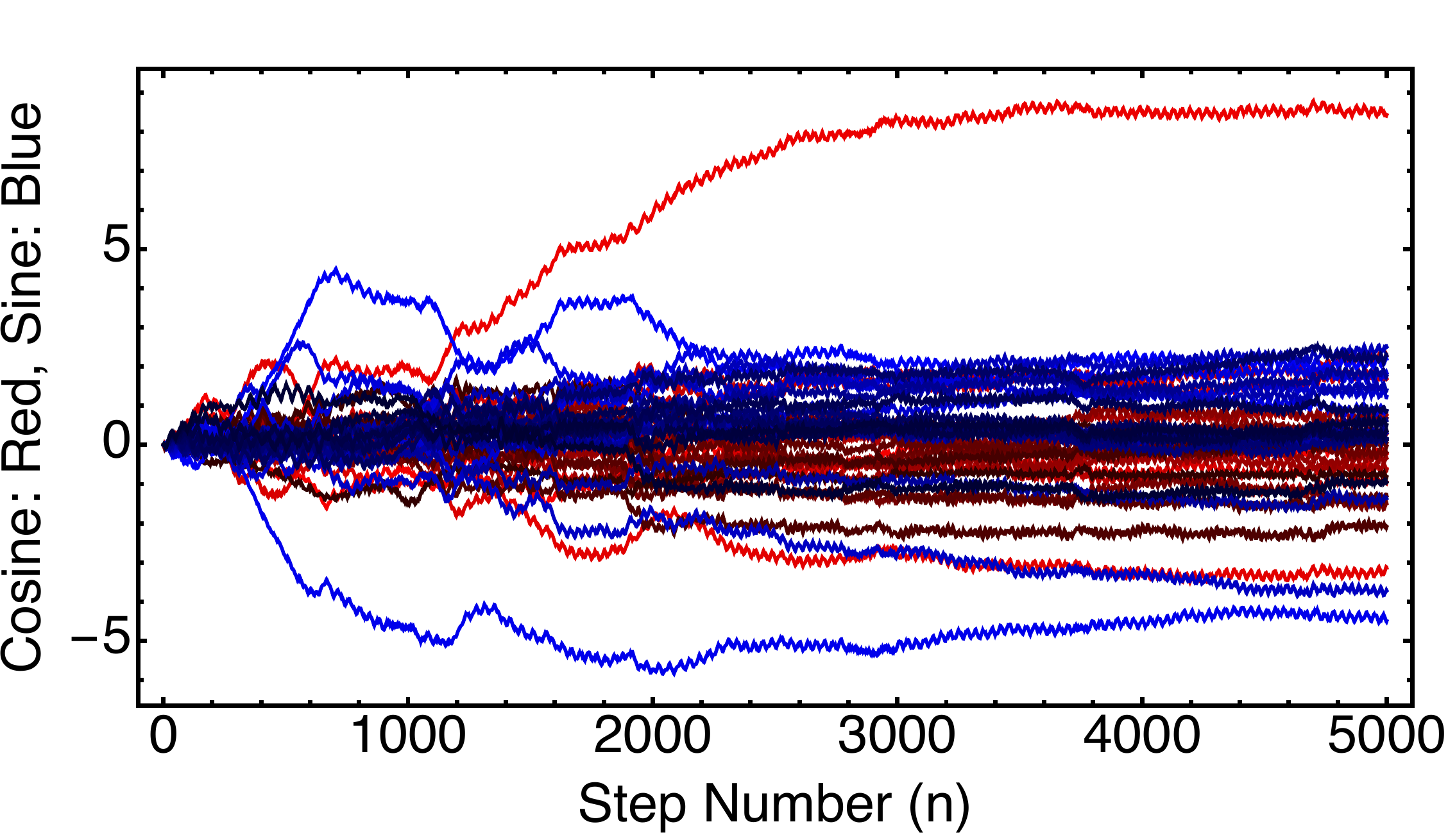} \
    \includegraphics[width=.232\textwidth]{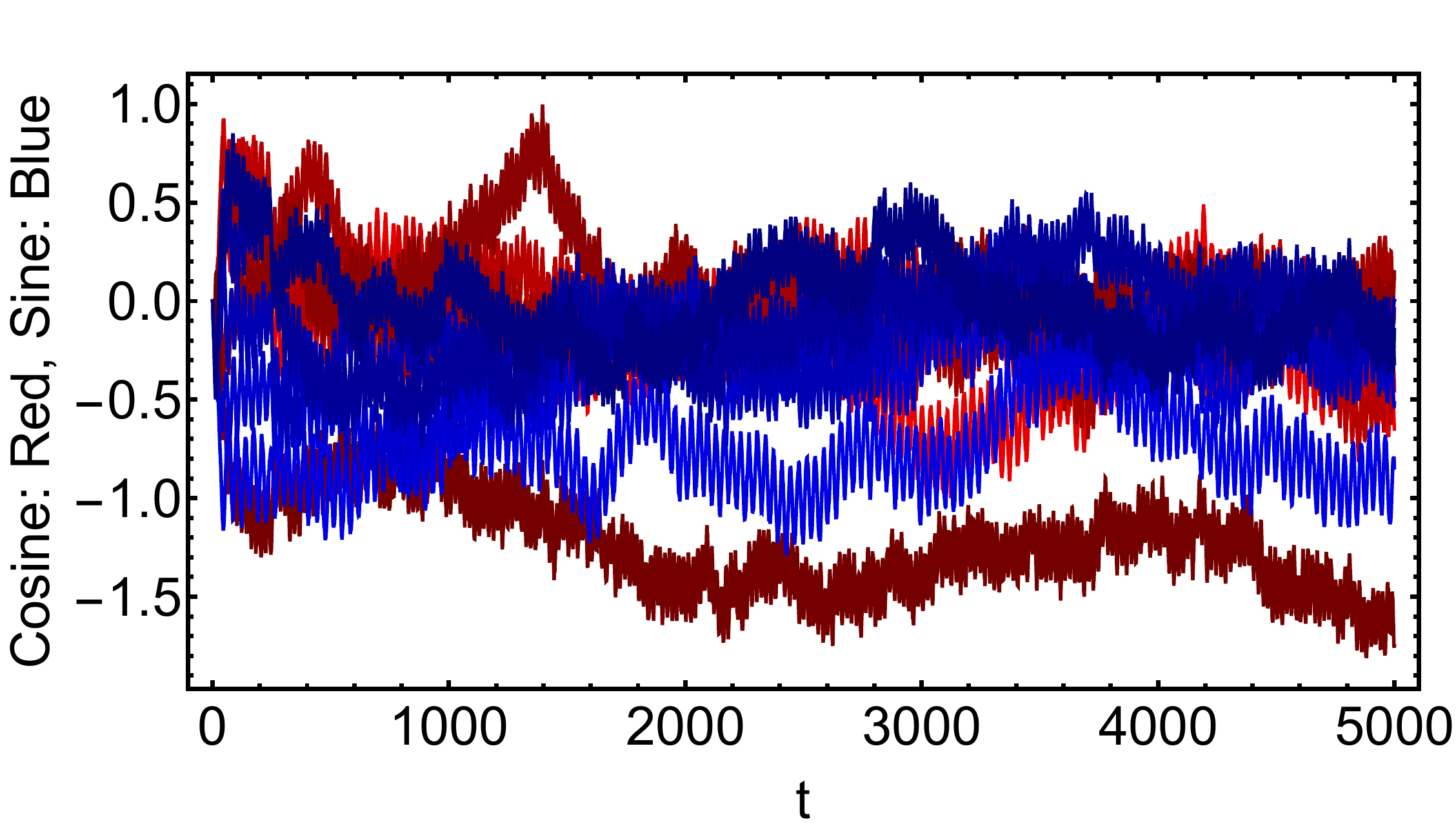} \\
    \includegraphics[width=.232\textwidth]{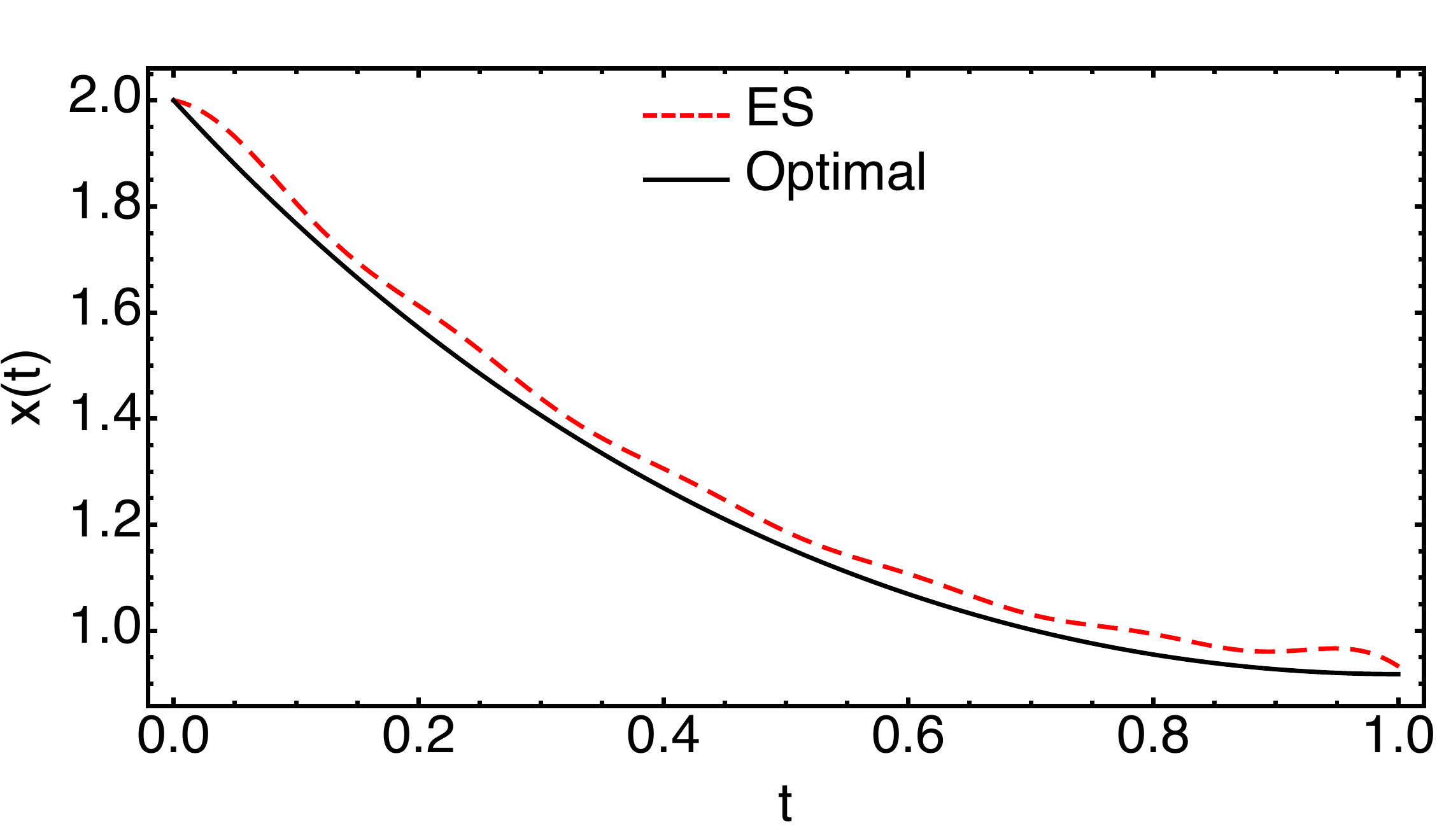} \
    \includegraphics[width=.232\textwidth]{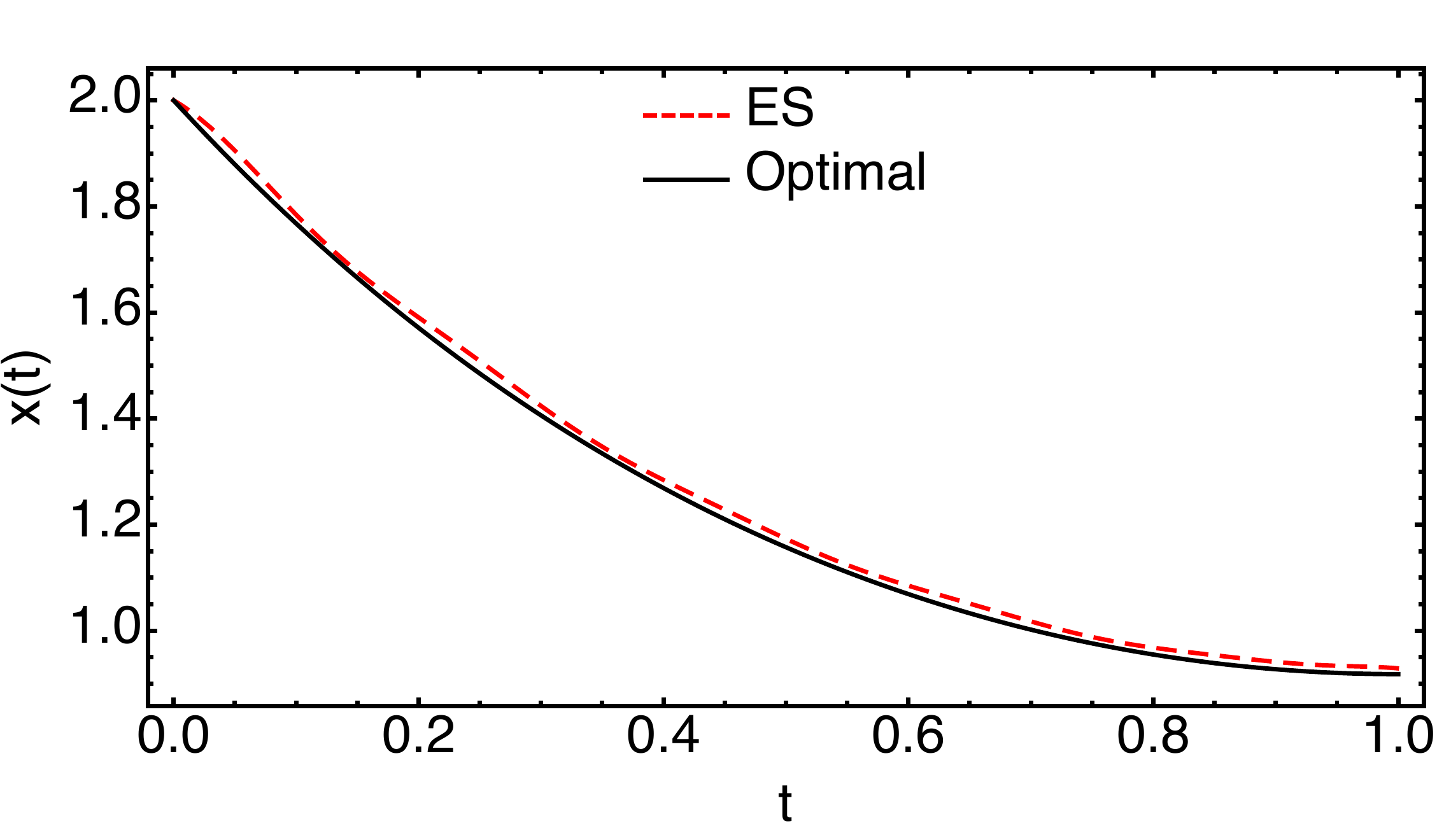} \
    \includegraphics[width=.232\textwidth]{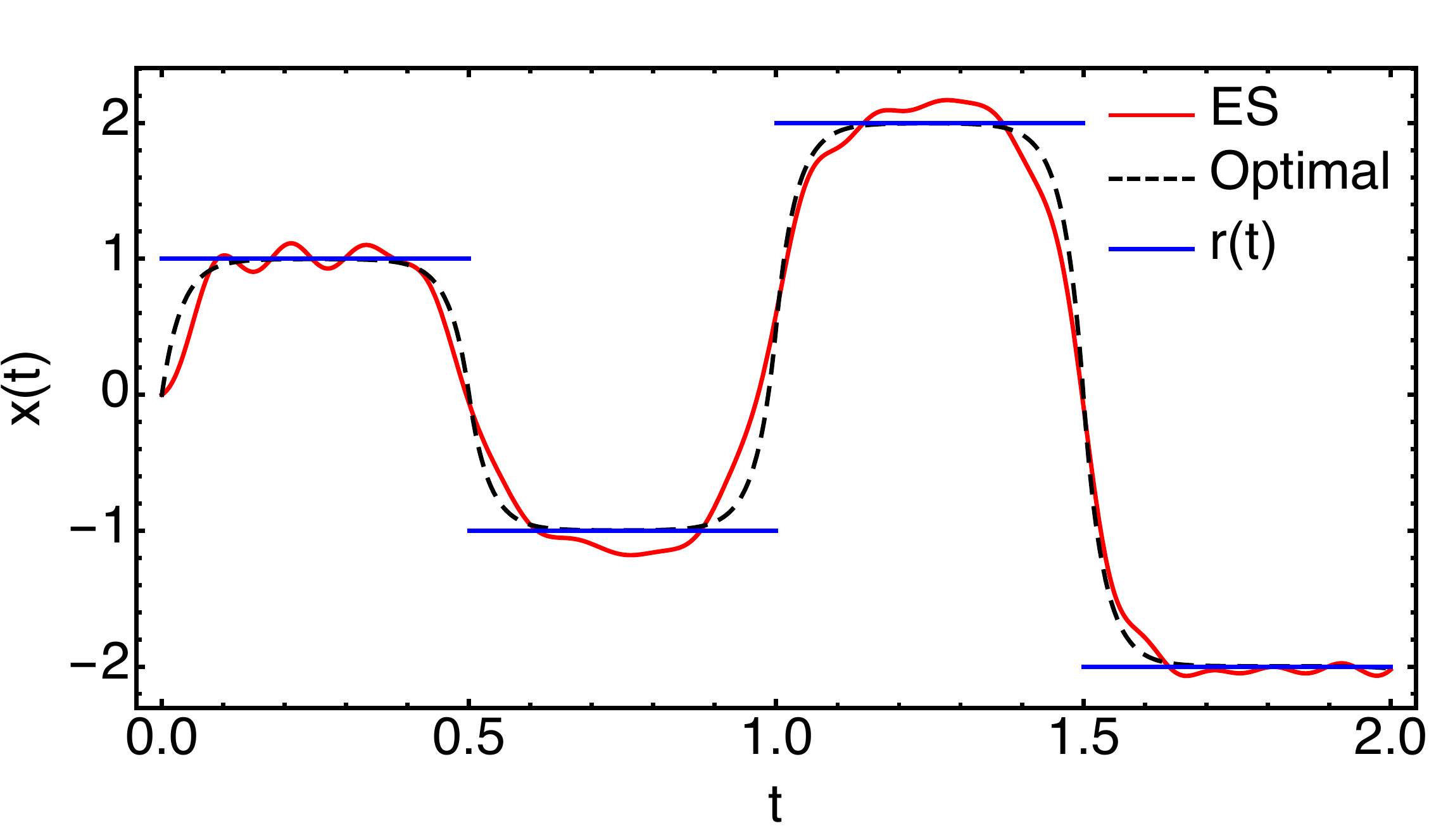} \
    \includegraphics[width=.232\textwidth]{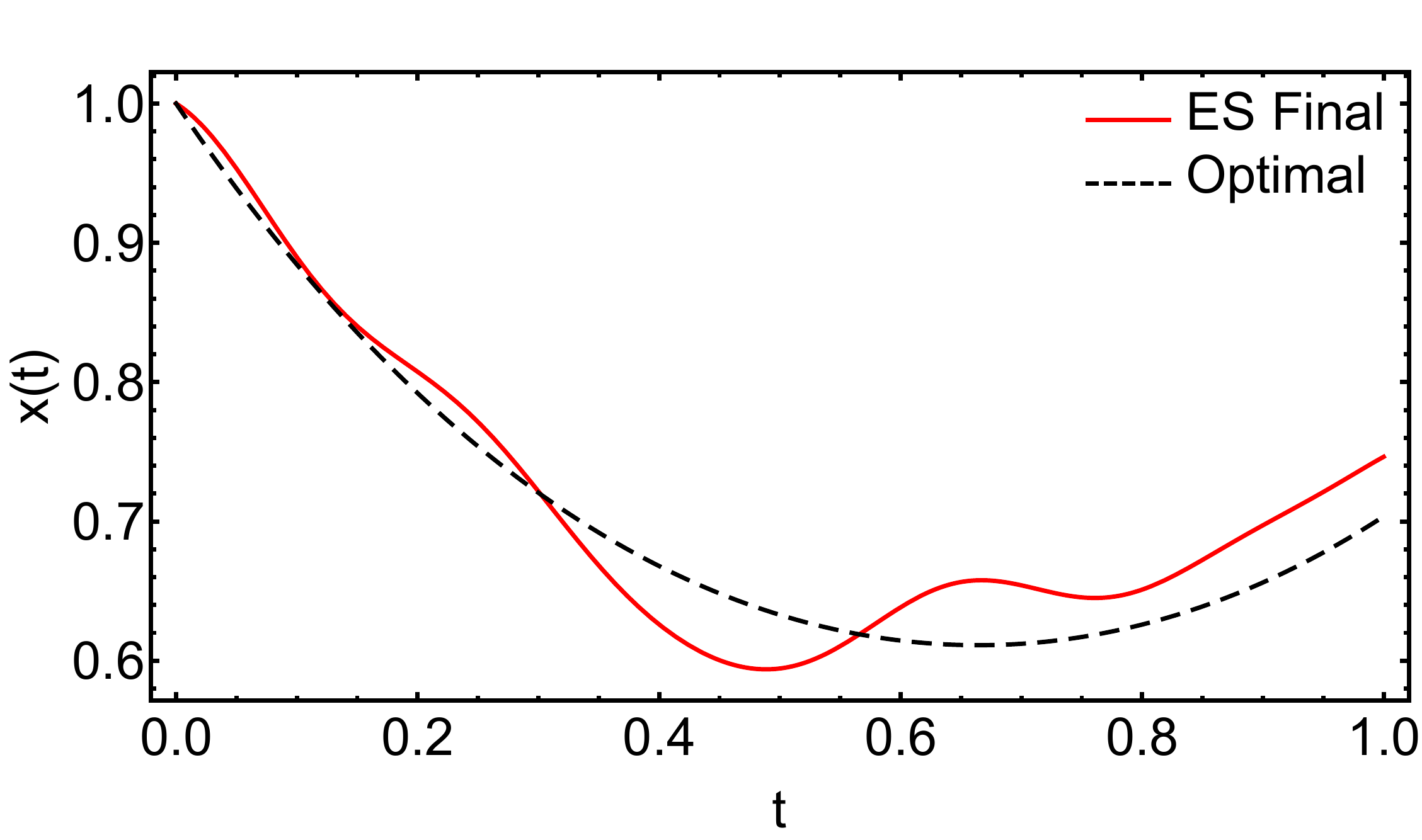} \\
    \includegraphics[width=.232\textwidth]{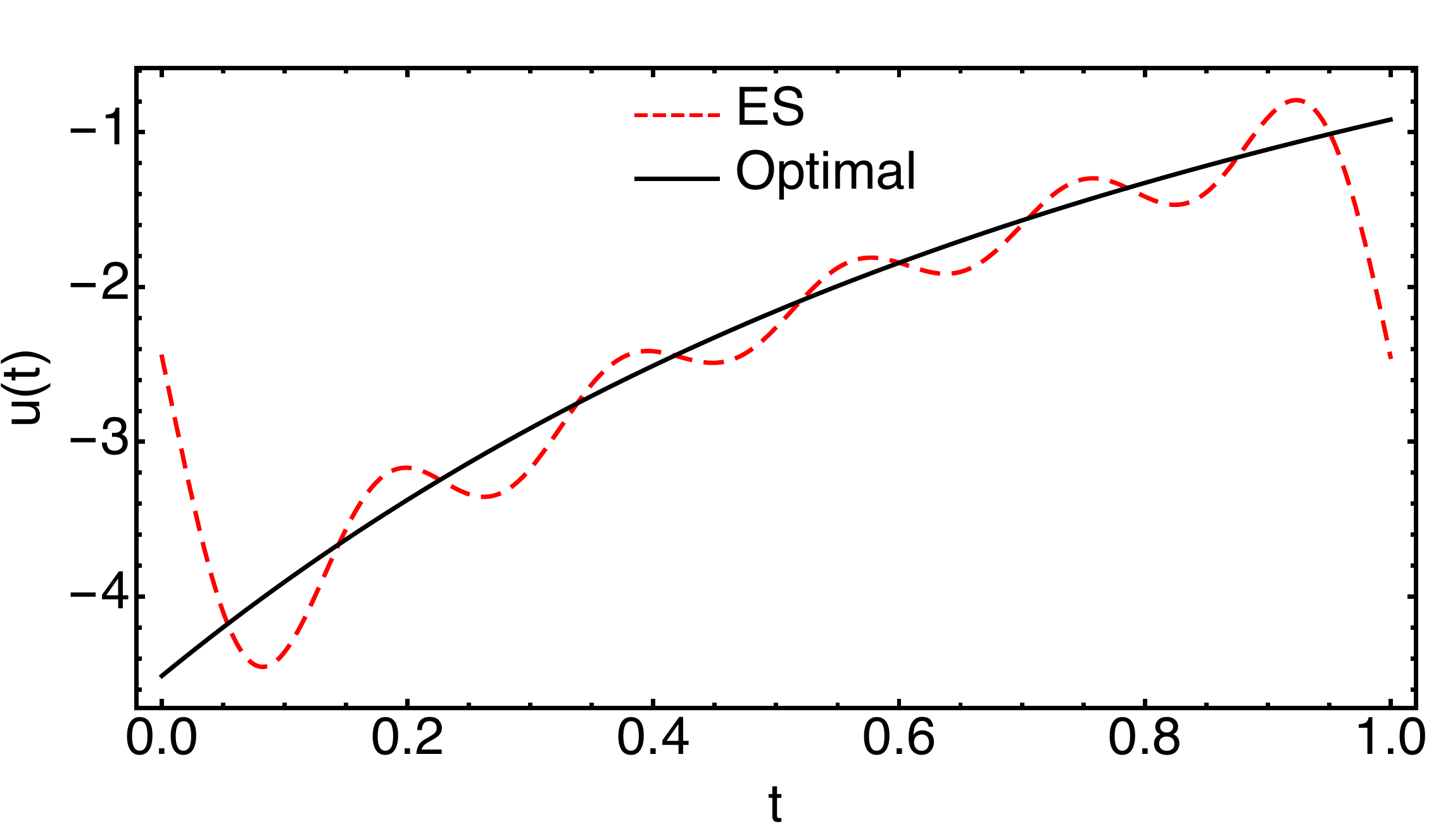} \
    \includegraphics[width=.232\textwidth]{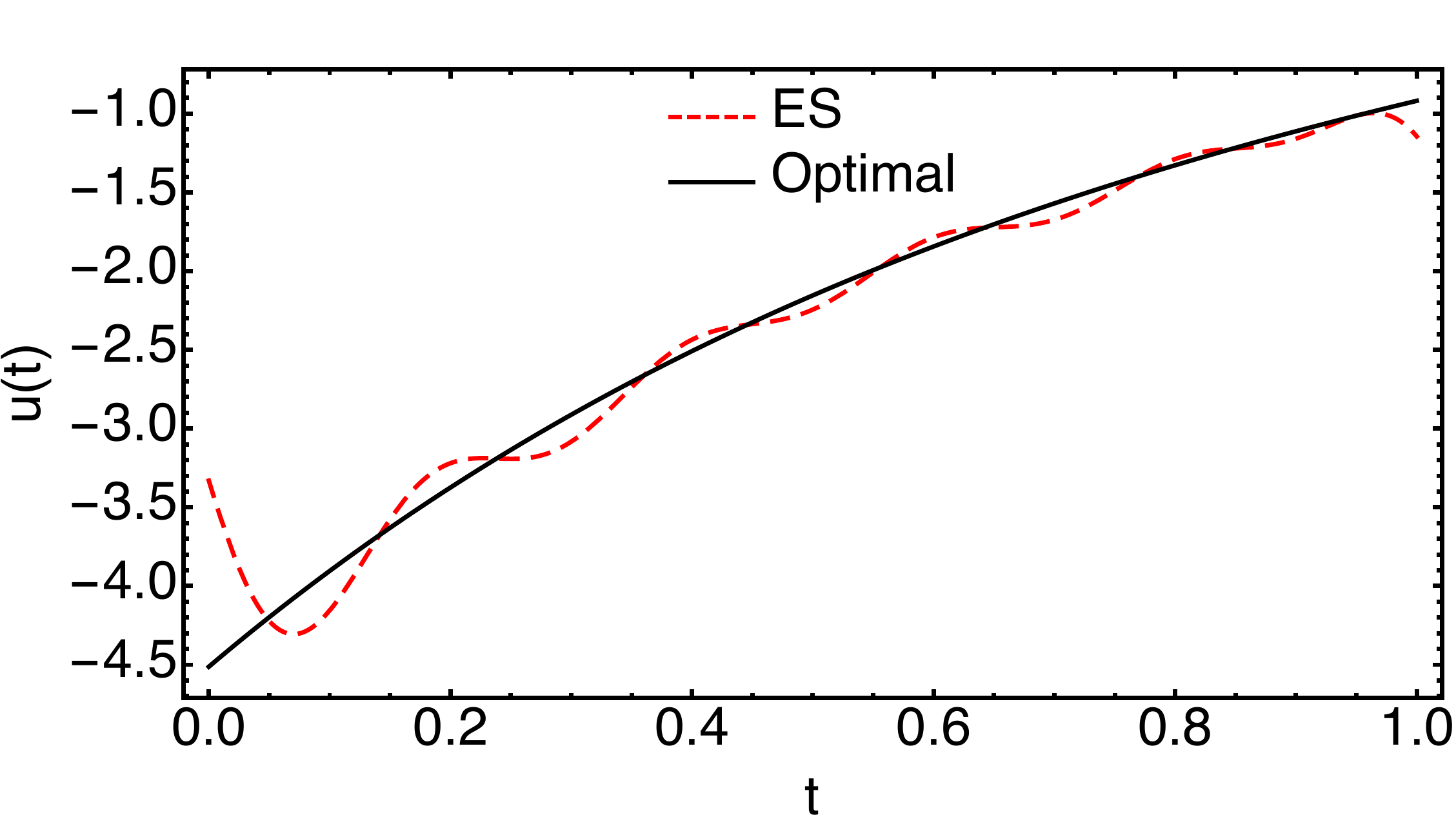} \
    \includegraphics[width=.232\textwidth]{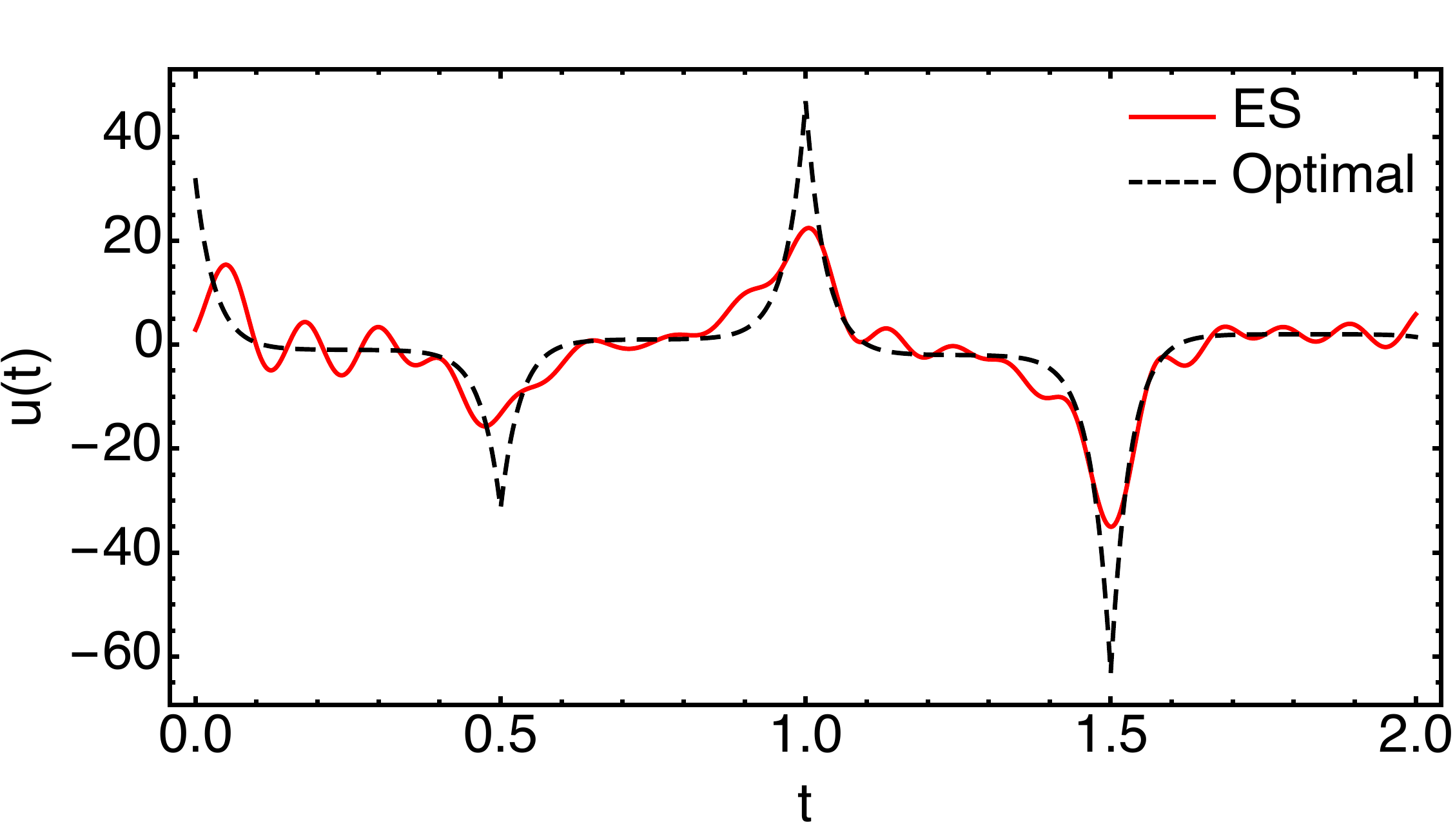} \
    \includegraphics[width=.232\textwidth]{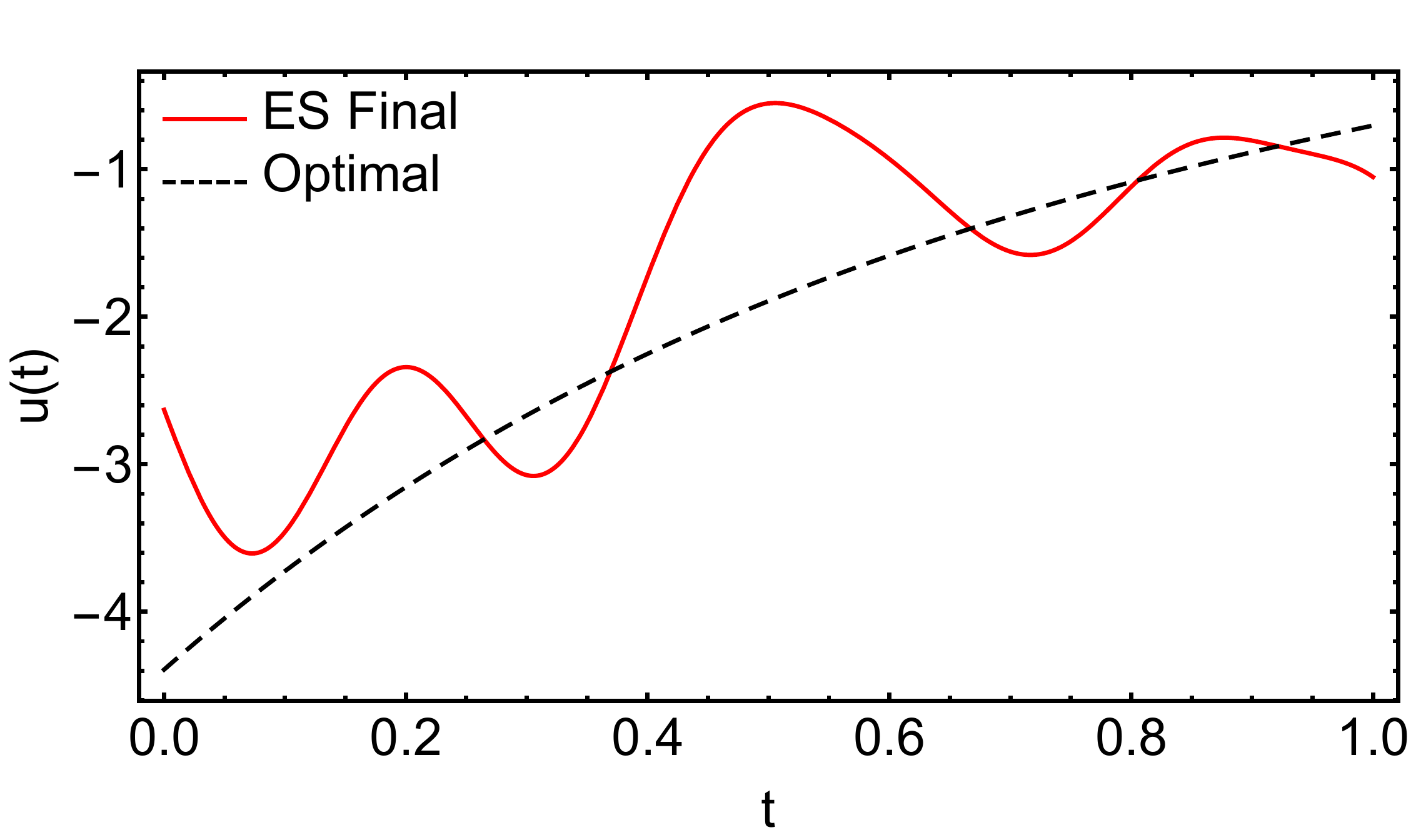} \\
\caption{Results of applying controller (\ref{ES_controller}) for the system in Example \ref{ex_simple} are shown in the first column. The second column shows the same system with controller (\ref{u_example1_v2}). The third and column shows optimal trajectory tracking results for the system in Example (\ref{tvex_simple}) with $m=20$ component controllers. The fourth column shows time varying results for system (\ref{ES_TV}).}
\label{fig_scalar_1}
\end{figure*}

We now focus on linear time-varying systems of the form
\begin{equation}
\frac{d\x(\tau,t)}{d\tau} = A(t)\x(\tau,t) + B(t)\u(\tau,t), \label{LQR_system}
\end{equation}
and their associated cost functions
\begin{equation}
J \ = \ \frac{1}{2} \left ( C\x(T,t) - \r(T) \right )^TP\left( C\x(T,t)-\r(T) \right ) \nonumber
\end{equation}
\begin{equation}
+ \frac{1}{2} \int_{0}^{T} \left (C\x(\tau,t)-\r(\tau) \right )^T Q \left ( C\x(\tau,t) -\r(\tau) \right )d\tau \nonumber
\end{equation}
\begin{equation}
+ \frac{1}{2} \int_{0}^{T} \u(t,\tau)^TR\u(t,\tau) d\tau, \label{LQR_track_cost}
\end{equation}
where $P \geq 0$, $Q \geq 0$, and $R > 0$ are symmetric.

For clarity, we start by stating a simple, scalar result with the particular choice of the Fourier basis over the interval $[0,T]$ with which most readers are familiar. We then present a general basis, full vector-valued result.

\subsection{Time invariant scalar linear quadratic tracker}

Consider a trajectory $\r(t)$, the system (\ref{LQR_system}), and performance index (\ref{LQR_track_cost}). The optimal feedback controller is known to be
\begin{equation}
	\u = -K(t) \x + R^{-1}B^T \v(t), \quad K(t) = R^{-1}B^TS(t), \label{analytic_opt}
\end{equation}
where 
\begin{equation}
	-\dot{S} = A^TS + SA - SBR^{-1}B^TS + C^TQC, S(T)=C^TPC
\end{equation}
and
\begin{equation}
	-\dot{\v} = \left ( A - BK(t) \right )^T\v + C^TQ\r(t), \quad \v(T) = C^TP\r(T). \label{analytic_opt_track}
\end{equation}

We will consider several examples of such systems and demonstrate that the iterative ES scheme converges to the known optimal controller, which we could have designed if we knew the system dynamics exactly.

\begin{example}\label{ex_simple}
For simplicity we start with the system (\ref{LQR_system}) with $A=1$, $B=1$, $x(0)=2$, and $\tau \in [0,1]$, with the objective function defined as $J = x^2(1) + \int_{0}^{1} \left ( x^2(\tau)+u^2(\tau) \right ) d\tau$. We compare the performance of the optimal controller (\ref{analytic_opt}) with the ES algorithm-based controller, a controller which is a linear combination of Fourier basis functions from $L^P[0,T]$:
\begin{equation}
	u(a,b,\tau) = \sum_{j=1}^{m}\left [ a_j(t)\cos\left ( \frac{2\pi j \tau}{T} \right) + b_j(t)\sin\left ( \frac{2\pi j \tau}{T} \right) \right ] \label{ES_controller}
\end{equation}
with $m=5$ Fourier components and evolve the $a_j(t)$ and $b_j(t)$ dynamics according to
\begin{equation}
	\frac{da_j}{dt} = \sqrt{\alpha\omega_j}\cos \left ( \omega_j t + k J \right ), \frac{db_j}{dt} = \sqrt{\alpha\omega_j}\sin \left ( \omega_j t + k J \right )
\end{equation}
with the results shown in the first column of Figure \ref{fig_scalar_1}. A clear limitation of this approach is that by choosing periodic basis functions on the interval $[0,T]$ forces our controller to be periodic, which in this case causes large controller effort overshoot and undershoot at the beginning and end of the time interval. A simple solution to this problem is to use basis functions from a slightly longer time interval, $[0,T+\Delta T]$, so that the controller has more freedom and does not have to be periodic on $[0,T]$, since the last $\Delta T$ time segment does not have any influence on the problem. The controller used in this case is given by:
\begin{equation}
	u = \sum_{j=1}^{5}\left [ a_j(t)\cos\left ( \frac{2\pi j \tau}{T+\Delta T} \right) + b_j(t)\sin\left ( \frac{2\pi j \tau}{T+\Delta T} \right) \right ]. \label{u_example1_v2}
\end{equation}
We choose $\Delta T=0.1$ and re-run the ES optimization, the results are shown in the second column of Figure \ref{fig_scalar_1}.
\end{example}

\begin{example}\label{tvex_simple}
We study a scalar system and find the optimal controller for tracking a time-varying trajectory $r(t)$. The system dynamics are the same as above and the objective function is given by (\ref{LQR_track_cost}), with $C=1$, $P=2$, $Q=20$ and $R=1/50$ which emphasizes tracking and barely penalizes controller effort. Applying controller (\ref{u_example1_v2}) with $m=20$ terms, the results are shown in the third column of Figure \ref{fig_scalar_1}.
\end{example}

\subsection{Time-varying noisy systems}

Next we demonstrate our algorithm's ability to handle time-varying, noisy systems. We simulate the system:
\begin{eqnarray}
	\dot{x} &=& a(t)x + b(t)u, \ \x(n)=1, \ t \in [n,n+1], \label{ES_TV} \\ 
    a(t) &=& 1+t/12000, \\
    b(t) &=& 1 + 0.25 \sin(2\pi t / 3000), \\
    J(n) &=& x^2(n+1) + \int_{n}^{n+1}x^2(t)+u^2(t)dt, \ n \in \mathbb{N}. \label{ES_TV_Cost}
\end{eqnarray}
In this system, $b(t)$ varies sinusoidally with a period of 50 minutes while $a(t)$ increases at a rate of doubling every 3.3 hours. Such time scales are typical for temperature dependent fluctuations of equipment in, for example, a particle accelerator. Our simulation proceeds as: \\
1). We start the simulation at $t=0$ and simulate (\ref{ES_TV})-(\ref{ES_TV_Cost}) over the time interval $t \in [0,1]$, utilizing the controller (\ref{u_example1_v2}), from which we calculate the value $J(0)$ given by (\ref{ES_TV_Cost}). We then record a noise-corrupted measurement, $\hat{J}(0)=J(0)+n(0)$, where $n$ is a mean 0 normally distributed random variable with standard deviation 0.5. \\
2). Utilizing the measurement $\hat{J}(0)$ we update controller parameters to new values, $a_j(1)$ and $b_j(1)$ based on (\ref{a_update}) which defines a new controller. We reset $x$ to $x(1)=1$ and simulate (\ref{ES_TV})-(\ref{ES_TV_Cost}) over the time interval $t \in [1,2]$ to calculate a new objective function value. \\
This process is continued iteratively, as illustrated in Figure \ref{time_scale} with $T=1$. The results are shown in the fourth column of Figure \ref{fig_scalar_1}. The simple time-varying system example described above is illustrative of how our control method could be applied in hardware for the optimal control of the dynamics of a accelerating cavity electromagnetic field, $V(t)$, whose dynamics depend on the geometry of the RF cavity, a time-varying parameter which drifts with temperature $\sim 2\pi \times100$ Hz over the course of a day. The system is initialized from $V(0)=0$, repeatedly for $T=0.001$ seconds at a time, at a rate of 120 times per second, as shown in Figure \ref{time_scale}. Therefore, the system re-starts with a period of $1/120 \sim 0.0083$ seconds, with $0.001$ seconds of operation and $\sim 0.0073$ seconds of off time. For such a system, we would choose initial controller parameters $a_j(0)$, $b_j(0)$, apply control for $t \in [0,0.001]$, calculate the costs, update controller parameters during the approximately $0.0073$ seconds long off time, and then re-initialize the system from 0 and run for another $t \in [0,001]$ seconds.

\subsection{Vector valued linear quadratic tracker}

We now present the general result for vector-valued systems and an arbitrary basis of $L^p[0,T]$. Consider the system
\begin{eqnarray}
	\frac{d\x(\tau,t)}{d\tau} &=& A(t)\x(\tau,t) + B(t)\u(c(t),\tau), \label{GenLQR} \\
	u_i(\c(t),\tau) &=& \sum_{j=1}^{m}c_{i,j}(t)\varphi_j(\tau), \label{GenLQRu}
\end{eqnarray}
where $\x(0,t) = \x_0$, the functions $\varphi_j(\tau): \mathbb{R} \rightarrow \mathbb{R}$ are any subset of any choice of basis $\beta$ of $L^p[0,T]$, and coefficients 
\begin{equation}
	\c = \left ( \c_1, \dots, \c_n \right ) \in \mathbb{R}^{n\times m}, \quad \c_i = \left ( c_{i,1},\dots,c_{i,m} \right ) \in \mathbb{R}^m.
\end{equation}
Consider a trajectory $\r(t) : \mathbb{R} \rightarrow \mathbb{R}^n$, 
and the cost function (\ref{LQR_track_cost}). The coefficients $\c_i$ have dynamics
\begin{equation}
    \frac{dc_{i,j}(t)}{dt} = \sqrt{\alpha\omega_{i,j}}\cos \left ( \omega_{i,j} t + k J(\c(t),t) \right ). \label{ccdotn}
\end{equation} 
For large $\omega_0$, the average coefficient dynamics are
\begin{equation}
	\dot{\bar{c}}_{i,j} = -\frac{k\alpha}{2}\frac{\partial J(\bar{\c},t)}{\partial \bar{c}_{i,j}}. \label{cdotLQRT}
\end{equation}
Because $J$ is convex relative to any basis, $\beta$ of $L^p[0,T]$, the $\c(t)$ converge towards values which give us the optimal controller minimizing $J$ over the subspace of $L^p[0,T]$ spanned by the basis vectors $\left \{ \varphi_1(t), \dots, \varphi_m(t) \right \}$. In particular, consider the following Corollary to Theorem \ref{Optimality}.

\begin{corollary}(Of Theorem \ref{Optimality})
For the time-dependent Linear Quadratic Tracker optimal control problem (\ref{LQR_track_cost}, (\ref{GenLQR}), the cost $\{J(a_\omega(t)\}$ of the controllers $\{u(a_\omega(t)\}$ defined by system (\ref{cdotLQRT}) converges to $min(J)$.
\end{corollary}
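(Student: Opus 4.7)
The strategy is to verify that the LQR tracking problem satisfies the hypotheses of Theorem \ref{Optimality} and then invoke that theorem directly. Three things must be checked: convexity and differentiability of the cost in $u$, existence of a minimizer $u^*(t)$, and a uniform bound on $\|du^*(t)/dt\|$.

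First I would show convexity and smoothness of $J$ in $u$. Because (\ref{GenLQR}) is linear, the state trajectory admits the representation $x(\tau,t) = \Phi(\tau;t)x_0 + \int_0^\tau \Phi(\tau;t)\Phi^{-1}(s;t)B(s)u(s,t)\,ds$, where $\Phi$ is the state-transition matrix of $A(t)$. Thus $x$ is affine in $u$, so $Cx-r$ is affine in $u$ and the first two terms of (\ref{LQR_track_cost}) are compositions of the convex quadratics $v \mapsto v^TPv$ and $v \mapsto v^TQv$ (with $P,Q \succeq 0$) with affine maps, hence convex. The third term is the quadratic $u \mapsto u^TRu$ with $R \succ 0$, which is strictly convex. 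The sum is therefore strictly convex and differentiable in $u$, and by composition with the linear map $c \mapsto u(c,\cdot)$ of (\ref{GenLQRu}), $J$ is strictly convex and differentiable in the coefficient vector $c$.

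Second, existence of $u^*(t)$ follows from classical LQR theory: the feedback law (\ref{analytic_opt}) together with the feedforward (\ref{analytic_opt_track}) defines a minimizer $u^*(t,\tau) \in L^p([0,T])$ for each outer time $t$. For the bound on $\|du^*(t)/dt\|$, I would assume (or take as a hypothesis inherited from the smoothness of $A(t)$, $B(t)$, and $r(t)$ assumed in Section \ref{ESLQRT}) that $S(t)$ and $v(t)$ are continuously differentiable in $t$; then $K(t)=R^{-1}B^TS(t)$ and the feedforward term $R^{-1}B^Tv(t)$ are continuously differentiable, and the resulting $u^*(t)$ has a bounded time derivative on any compact interval, yielding the required $M$.

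Third, having verified all three hypotheses, Theorem \ref{Optimality} applies directly: for any $\epsilon>0$ there exist $k,\alpha, \omega^*, m^*$ and an interval $[t^*,t^*+T^*]$ such that the controller $u(c_\omega(t),\cdot)$ produced by the extremum-seeking dynamics (\ref{ccdotn}) satisfies $|J(c_\omega(t),t) - J(u^*(t),t)| < \epsilon$ on this interval, which is exactly convergence of $\{J(a_\omega(t))\}$ to $\min(J)$. The main obstacle is the bound on $\|du^*/dt\|$, since it requires additional regularity of the problem data $A(t),B(t),r(t)$ that is not spelled out in the corollary statement but is implicitly used throughout Section \ref{ESLQRT}; apart from this regularity assumption the proof is essentially an invocation of the already-established Theorem \ref{Optimality}.
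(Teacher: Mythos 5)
Your proposal is correct and follows essentially the same route as the paper: establish convexity of $J$ in $u$ (and hence in the coefficients) by writing each term as a composition of a convex quadratic with an affine map, then invoke Theorem \ref{Optimality}. You are in fact more careful than the paper's own proof, which checks only convexity and silently omits the verification of the existence of $u^*(t)$ and of the bound on $\left \| du^*(t)/dt \right \|$ that you correctly flag as requiring additional regularity of $A(t)$, $B(t)$, and $r(t)$.
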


\begin{proof}
Consider the time-dependent Linear Quadratic Tracker. To see that the cost function $J$ is convex in $u$ recall that (1) affine functions are convex, (2) a quadratic form $x^tQx$ is convex whenever $Q$ is positive semidefinite, (3) if $f$ is convex and $g$ is affine, then their composition is $f\circ g$ is convex, and (4) that $(C \x(u)(T) - \r(T) )^TP (C\x(u)(T)-\r(T)$ is the composition of the convex function $\y \to \y^TP\y$ with the affine function $\y \rightarrow (C\y-r)$ with the linear function $u\rightarrow \x(u)$. Similar reasoning establishes the convexity of the other components of $J$. The result follows from Theorem \ref{Optimality}.
\end{proof}

\subsection{Feedback control when state measurement $x(t)$ available}

\begin{figure}[!t]
	\centering
    \includegraphics[width=.35\textwidth]{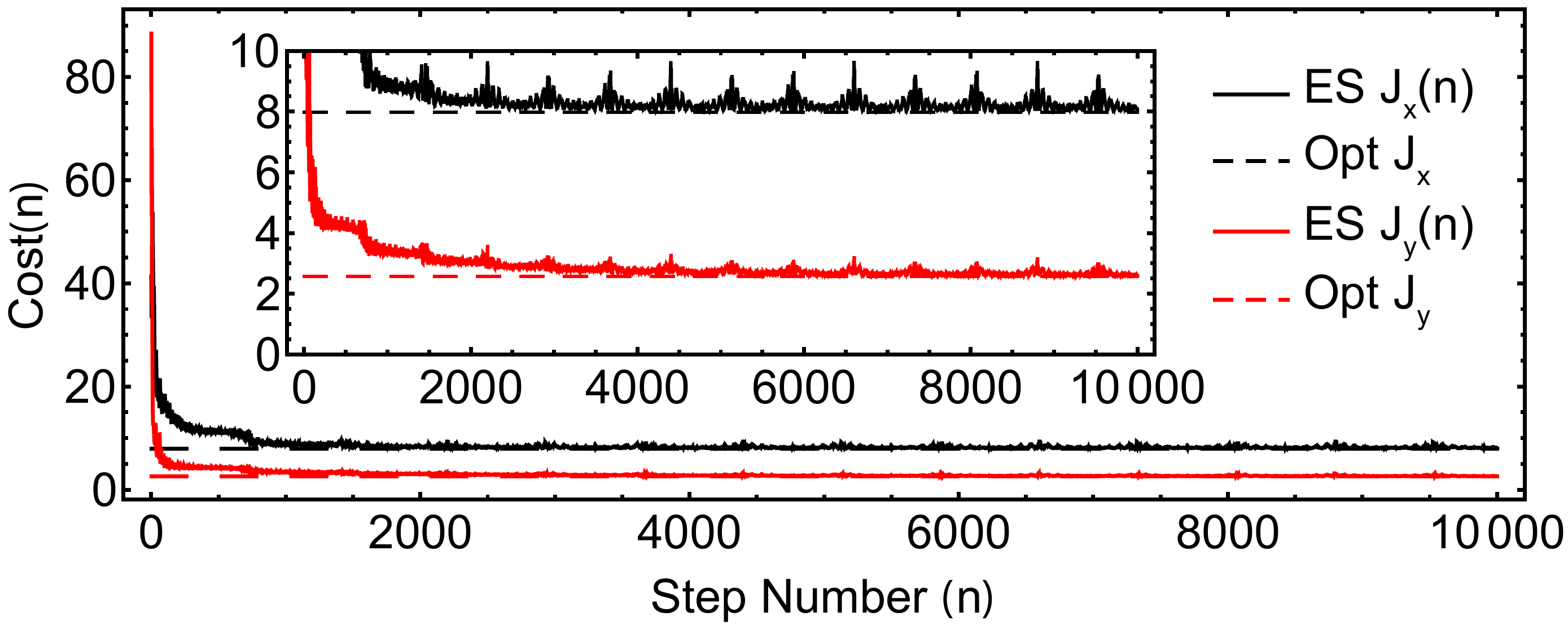}
    \includegraphics[width=.35\textwidth]{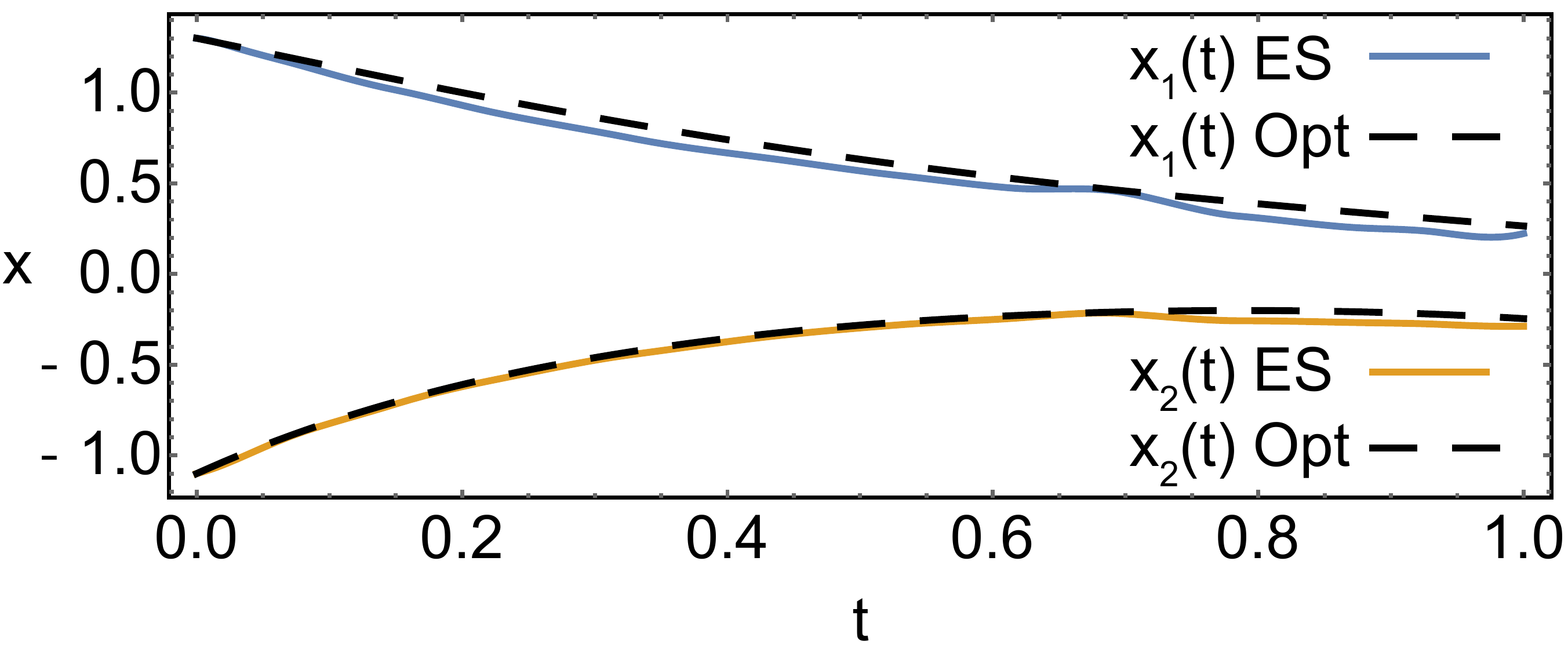}
    \includegraphics[width=.35\textwidth]{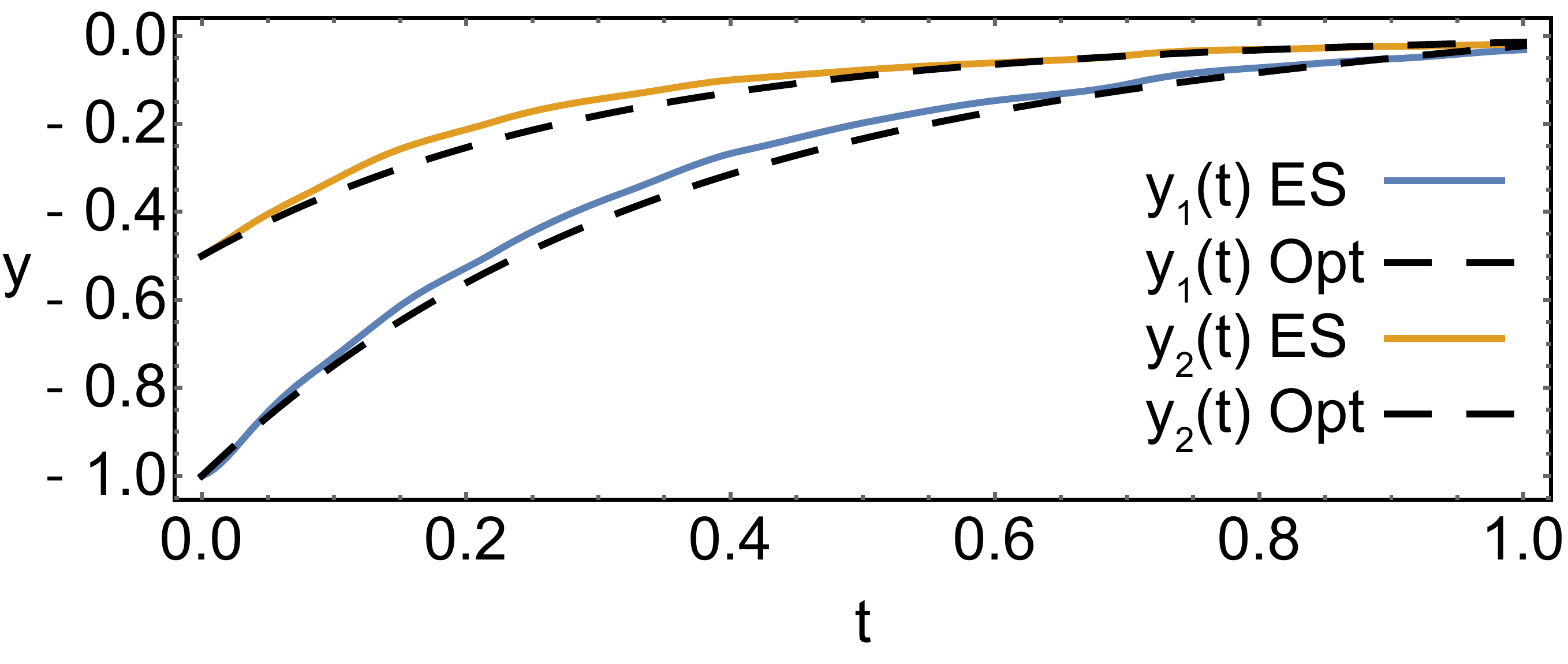}
    \includegraphics[width=.35\textwidth]{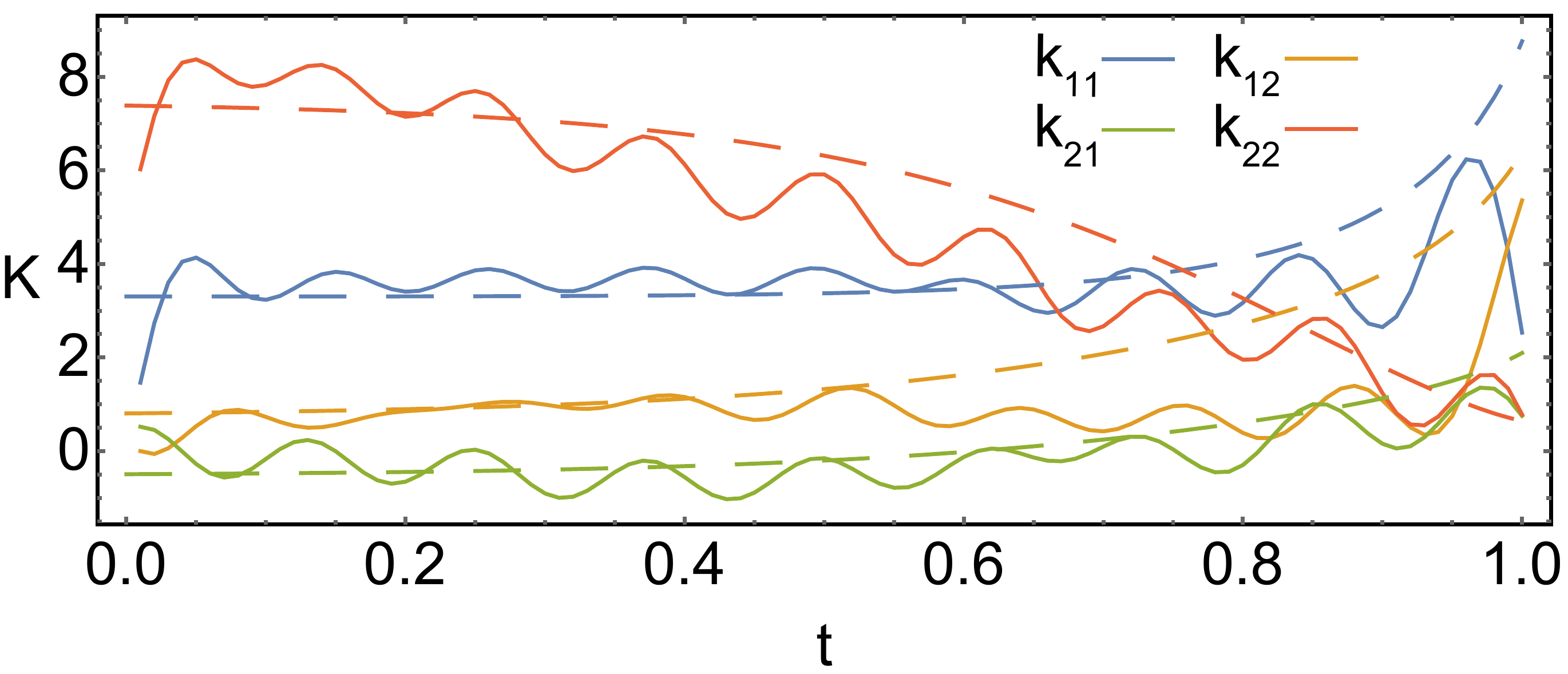}
    \includegraphics[width=.35\textwidth]{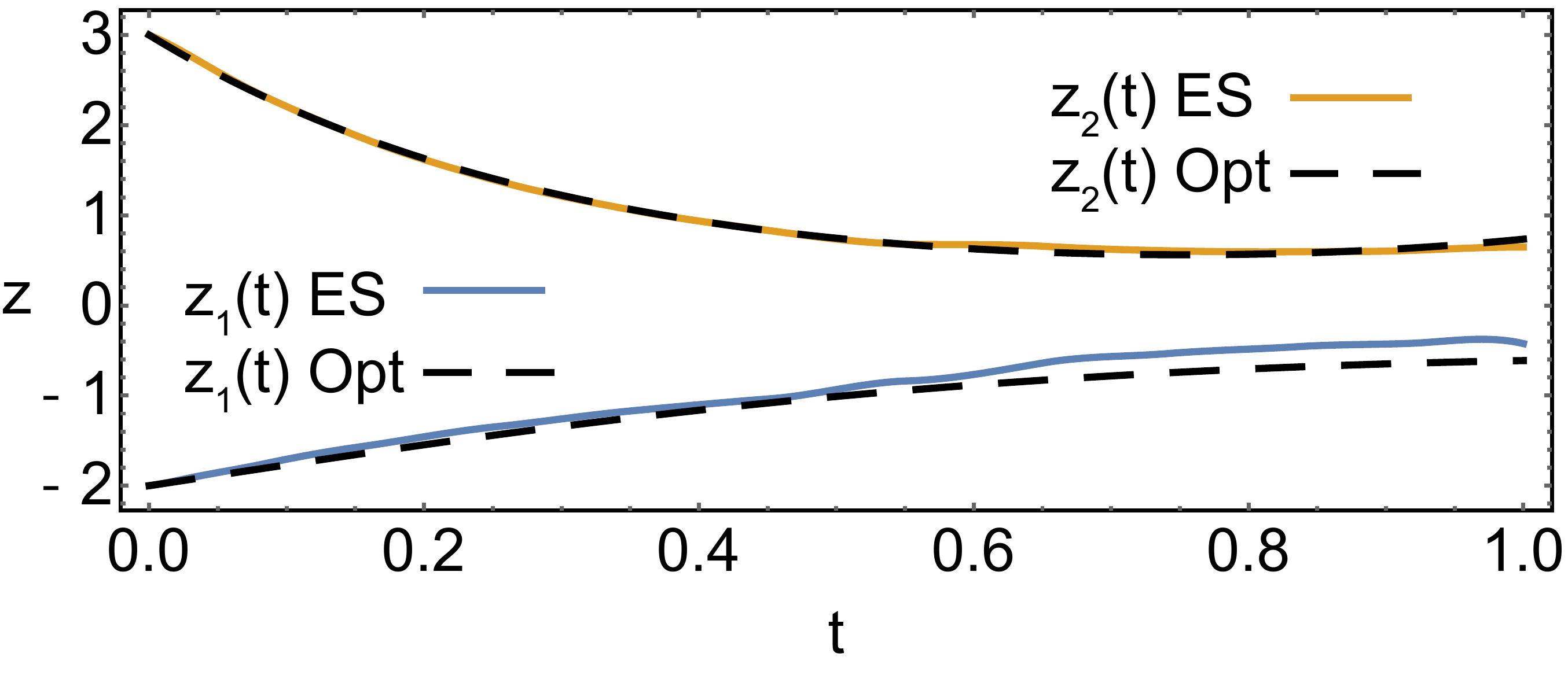}
\caption{The top plot shows the evolution of the objective functions $J_x$ and $J_y$ for the same system with two different initial conditions. The second and third plots down show the resulting trajectories alongside the analtically known optimal trajectories. The fourth plot shows the $K_{\mathrm{ES}}(t,\tau)$ to which the ES scheme has converged alongside the analytically determined $K(t)$ if the system and objective funtion had been known. Finally, the last plot shows the results of applying the feedback $-K_{\mathrm{ES}}(t,\tau)x(t)$ for a third initial condition, for system (\ref{z_sys}).}
\label{fig_2D}
\end{figure}

For a linear system, without tracking, we know that the optimal controller is of the form $\u(\tau) = -K(\tau) x(\tau)$ for $\tau \in [t_0,t_0+T]$, with $K$ given according to (\ref{analytic_opt}). If we have access to full state measurements $x(\tau)$, we can then design our controller in such a way that we find the $K(\tau)$ matrix directly. For an $n$-dimensional system, there are $n^2$ unknowns in the $K(\tau)$ matrix at any time $\tau$, therefore, we must solve the optimal control problem, (\ref{LQR_system}), (\ref{LQR_track_cost}) for $\geq n$ different initial conditions. We demonstrate this with a 2-dimensional example. We repeatedly simulate the following two systems for $\tau \in [0,1]$
\begin{equation}
	\dot{x} = Ax + Bu_x, \quad u_x = -K_{\mathrm{ES}}(t,\tau)x, \ x(0) = (x_{1,0},x_{2,0}), \label{2D_sys_ex}
\end{equation}
\begin{equation}
	\dot{y} = Ay + Bu_y, \quad u_y = -K_{\mathrm{ES}}(t,\tau)y, \ y(0) = (y_{1,0},y_{2,0}),
\end{equation}
with feedback gain matrix $K_{\mathrm{ES}} = \left \{ k_{l,p}(t,\tau) \right \}$,
\begin{equation}
	k_{l,p}(t,\tau) = \sum_{j=1}^{m}\left [ a^{l,p}_j(t)\cos\left ( \nu_j \tau\right) + b^{l,p}_j(t)\sin\left ( \nu_j \tau \right) \right ],  \label{K_controller}
\end{equation}
where as before $\nu_j = \frac{2\pi j}{T+\Delta T}$ and we update the $a^{l,p}_j(t)$, $b^{l,p}_j(t)$ according to (\ref{ccdotn}), where the objective function being minimized is now the sum $J=J_x+J_y$, where
\begin{eqnarray}
J_x &=& \frac{x^TPx}{2} + \frac{1}{2} \int_{0}^{T} \left ( x^TQx + u^T_x Ru_x \right ) d\tau, \label{LQR_2D_examplex} \\
J_y &=& \frac{y^TPy}{2} + \frac{1}{2} \int_{0}^{T} \left ( y^TQy + u^T_yRu_y \right ) d\tau. \label{LQR_2D_exampley}
\end{eqnarray}
Once the algorithm has converged and we have found a good $K_{\mathrm{ES}}(t,\tau)$, we now have an optimal feedback controller for the unknown system (\ref{2D_sys_ex}) relative to the unknown objective function (\ref{LQR_2D_examplex}), for all initial conditions. For example, consider
\begin{equation}
	A = \left[\begin{array}{cc}
    1.0 & 0.25 \\
    0.3 & 0.7
    \end{array}\right], 
    B = \left[\begin{array}{cc}
    1.0 & 0.1 \\
    0.2 & 0.5
    \end{array}\right], 
    P = \left[\begin{array}{cc}
    4.0 & 3.0 \\
    3.0 & 1.0
    \end{array}\right], \nonumber
\end{equation}
\begin{equation}
    Q = \left[\begin{array}{cc}
    2.0 & 0.1 \\
    0.1 & 10
    \end{array}\right],
    R = \left[\begin{array}{cc}
    0.5 & 0.1 \\
    0.1 & 0.25
    \end{array}\right],
\end{equation}
$x(0) = (1.3,-1.1)$, and $y(0)=(-1,-0.5)$. The $K_{\mathrm{ES}}$ matrix is found iteratively with $m=10$, by
\begin{eqnarray}
    a^{1,1}_j(s+1) &=& a^{1,1}_j(s) + \Delta \sqrt{\alpha \omega^{1}_{a,j}}\cos \left ( \omega^{1}_{a,j} s \Delta + k J(s) \right ), \nonumber \\
    b^{1,1}_j(s+1) &=& b^{1,1}_j(s) + \Delta \sqrt{\alpha \omega^{1}_{b,j}}\cos \left ( \omega^{1}_{b,j} s \Delta + k J(s) \right ), \nonumber \\
    a^{1,2}_j(s+1) &=& a^{1,2}_j(s) + \Delta \sqrt{\alpha \omega^{1}_{a,j}}\sin \left ( \omega^{1}_{a,j} s \Delta + k J(s) \right ), \nonumber \\
    b^{1,2}_j(s+1) &=& b^{1,2}_j(s) + \Delta \sqrt{\alpha \omega^{1}_{b,j}}\sin \left ( \omega^{1}_{b,j} s \Delta + k J(s) \right ), \nonumber
\end{eqnarray}
with the frequencies $\left \{ \omega^{1}_{a,1}, \dots, \omega^{1}_{a,m}, \omega^{1}_{b,1},\dots,\omega^{1}_{b,m} \right \}$ evenly distributed between $\omega_0$ and $1.35 \omega_0$. Similary, the $a^{2,1}_j$, $b^{2,1}_j$ are updated with $\cos()$ while $a^{2,2}_j$, $b^{2,2}_j$ are updated with $\sin()$, using $\omega^{2}_{a,j}$, $\omega^{2}_{b,j}$, where the frequencies $\left \{ \omega^{2}_{a,1}, \dots, \omega^{2}_{a,m}, \omega^{2}_{b,1},\dots,\omega^{2}_{b,m} \right \}$ are evenly distributed between $1.35\omega_0$ and $1.75 \omega_0$. We used $\omega_0 = 3197$, $\Delta = 2\pi /( 10 \times 1.75 \omega_0)$, $\alpha=320$, and $k=0.1$. Once the ES parameters have converged, we compare $K_{\mathrm{ES}}$ to the analytically known optimal $K$ and we apply the ES-based feedback control to a third system with new initial conditions:
\begin{equation}
	\dot{z} = Az + Bu_z, \quad u_z = -K_{\mathrm{ES}}(t,\tau)z, \quad z(0) = (z_{1,0},z_{2,0}), \label{z_sys}
\end{equation}
with $z(0) = (-2,3)$, and compare the trajectories to those obtained by the optimal controller $u = -Kz$. The simulation results are summarized in Figure (\ref{fig_2D}).

For a fixed desired trajectory, $r(t)$, according to equations (\ref{analytic_opt}), (\ref{analytic_opt_track}), the optimal feedback control is given by $u=-K(\tau)x(\tau) + R^{-1}B^Tv(\tau)$, where $v(\tau)$ depends on system and objective function parameters and $r(\tau)$, but not on the initial condition value $x(0)$. Therefore, in a similar fashion as done above, we can iteratively create an optimal tracking feedback controller independent of the initial conditions by creating a controller of the form:
\begin{equation}
	u = -K_{\mathrm{ES}}(\tau)x(\tau) + V_{\mathrm{ES}}(\tau).
\end{equation}
This can be applied to linear systems of any dimensions.

%

\section{Conclusions}\label{sec:con}

%

We present an iterative ES algorithm for creating optimal controllers for unknown, time-varying systems based only on noisy measurements of analytically unknown objective functions. This algorithm can be useful for application in hardware for systems that are repeatedly initialized from the same initial conditions and must perform a given task despite uncertain time variation of the system components, such as drifts due to aging and thermal cycling.

%

\end{document}